\providecommand{\abs}[1]{\lvert#1\rvert}
\newtheorem{lemma}{Lemma}
\newtheorem{theorem}{Theorem}
\newtheorem{corollary}{Corollary}
\newtheorem{question}{Question}
\newcommand{\ov}{\overline}
\newcommand{\cb}{{\cal B}}
\newcommand{\ce}{{\cal E}}
\newcommand{\cf}{{\cal F}}
\newcommand{\ch}{{\cal H}}
\newcommand{\cl}{{\cal L}}
\newcommand{\ra}{\rightarrow}
\newcommand{\Ra}{\Rightarrow}
\newcommand{\dist}{\mbox{\em dist\/}}
\newcounter{subclaim}
\newenvironment{subclaim}{\stepcounter{subclaim}\begin{itemize}\item[(\roman{subclaim})]\def\@currentlabel{(\roman{subclaim})}}{\end{itemize}}
\let\@oldproof=\proof\def\proof{\setcounter{subclaim}{0}\@oldproof} 
\begin{document}

\begin{center}
{\bf\Large Lines in hypergraphs}\\
\vspace{0.5cm}
Laurent Beaudou (Universit\' e Blaise Pascal, Clermont-Ferrand)\footnote{{\tt laurent.beaudou@ens-lyon.org}}\\
 Adrian Bondy (Universit\' e Paris 6)\footnote{{\tt adrian.bondy@sfr.fr}}\\
 Xiaomin Chen (Shanghai Jianshi LTD)\footnote{{\tt gougle@gmail.com}}\\
 Ehsan Chiniforooshan (Google, Waterloo)\footnote{{\tt chiniforooshan@alumni.uwaterloo.ca}}\\
 Maria Chudnovsky (Columbia University, New York)\footnote{{\tt mchudnov@columbia.edu}\\
\hphantom{lllllxxx}Partially supported by NSF grants DMS-1001091 and IIS-1117631}\\
 Va\v sek Chv\' atal (Concordia University, Montreal)\footnote{{\tt chvatal@cse.concordia.ca}\\
\hphantom{lllllxxx}Canada Research Chair in Combinatorial Optimization}\\
 Nicolas Fraiman (McGill University, Montreal)\footnote{{\tt nfraiman@gmail.com}}\\
 Yori Zwols (Concordia University, Montreal)\footnote{{\tt yzwols@gmail.com}}
\end{center}

\begin{center}
{\bf Abstract}
\end{center}
{\small One of the De Bruijn - Erd\H os theorems deals with finite
  hypergraphs where every two vertices belong to precisely one
  hyperedge. It asserts that, except in the perverse case where a
  single hyperedge equals the whole vertex set, the number of
  hyperedges is at least the number of vertices and the two numbers
  are equal if and only if the hypergraph belongs to one of simply
  described families, near-pencils and finite projective planes.  Chen
  and Chv\' atal proposed to define the line $uv$ in a $3$-uniform
  hypergraph as the set of vertices that consists of $u$, $v$, and all
  $w$ such that $\{u,v,w\}$ is a hyperedge. With this definition, the
  De Bruijn - Erd\H os theorem is easily seen to be equivalent to the
  following statement: If no four vertices in a $3$-uniform hypergraph
  carry two or three hyperedges, then, except in the perverse case
  where one of the lines equals the whole vertex set, the number of
  lines is at least the number of vertices and the two numbers are
  equal if and only if the hypergraph belongs to one of two simply
  described families. Our main result generalizes this statement by
  allowing any four vertices to carry three hyperedges (but keeping
  two forbidden): the conclusion remains the same except that a third
  simply described family, complements of Steiner triple systems,
  appears in the extremal case.}

\section{Introduction}\label{sec.intro}

Two distinct theorems are referred to as ``the De Bruijn - Erd\H os
theorem''. One of them \cite{DE51} concerns the chromatic number of
infinite graphs; the other \cite{DE48} is our starting point:
\begin{quote}{\em Let $m$ and $n$ be positive integers such that $n\ge
    2$; let $V$ be a set of $n$ points; let $\cl$ be a family of $m$
    subsets of $V$ such that each member of $\cl$ contains at least
    two and at most $n-1$ points of $V$ and such that every two points
    of $V$ belong to precisely one member of $\cl$. Then $m\ge n$,
    with equality if and only if 

    one member of $\cl$ contains $n-1$ points of $V$ and each of the
    remaining $n-1$ members of $\cl$ contains two points of $V$ 

    or else $n=k(k-1)+1$, each member of $\cl$ contains $k$ points of
    $V$, and each point of $V$ is contained in $k$ members of $\cl$.  }
\end{quote}
We study variations on this theme that are generated through the
notion of lines in hypergraphs. A {\em hypergraph\/} (the term comes
from Claude Berge~\cite{Ber}) is an ordered pair $(V,\ce)$ such that
$V$ is a set and $\ce$ is a set of subsets of $V$; elements of $V$ are
the {\em vertices\/} of the hypergraph and elements of $\ce$ are its
{\em hyperedges;\/} a hypergraph is called {\em $k$-uniform\/} if all
its hyperedges have precisely $k$ vertices.  Given a $3$-uniform
hypergraph and its distinct vertices $u,v$, Chen and Chv\'
atal~\cite{CC} define the {\em line\/} $\ov{uv}$ as the set of
vertices that consists of $u$, $v$, and all $w$ such that $\{u,v,w\}$
is a hyperedge. (When $V$ is a subset of the Euclidean plane and $\ce$
consists of all collinear triples of vertices, $\ov{uv}$ is the
intersection of $V$ and the Euclidean line passing through $u$ and
$v$.)

\bigskip

If, as in the hypothesis of the De Bruijn - Erd\H os theorem,
$(V,\cl)$ is a hypergraph in which each hyperedge contains at least
two vertices and every two vertices belong to precisely one
hyperedge, then $\cl$ is the set of lines of a $3$-uniform hypergraph
$(V,\ce)$: to see this, let $\ce$ consist of all the three-point
subsets of all hyperedges in $\cl$. As for the converse of this
observation, if $(V,\ce)$ is a $3$-uniform hypergraph, then each of
its lines contains at least two vertices and every two 
vertices belong to at least one line, but they may belong to more than
one line.  For example, if $V$ contains distinct vertices 
 $p,q,r,s$ such
that $\{p,q,r\}\in\ce$, $\{p,q,s\}\in\ce$, $\{p,r,s\}\not\in\ce$, then
lines $\ov{pr}$, $\ov{ps}$ are distinct and $p,q$ belong to both of them. 
Now we are going to show that this is the only example.

\begin{theorem}\label{thm.no2no3}
  If, in a {\rm $3$}-uniform hypergraph $(V,\ce)$, some two vertices
  belong to more than one line, then $V$ contains distinct vertices
  $p,q,r,s$ such that $\{p,q,r\}\in\ce$, $\{p,q,s\}\in\ce$,
  $\{p,r,s\}\not\in\ce$.
\end{theorem}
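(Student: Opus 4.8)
The plan is to prove the contrapositive: assuming that no four distinct vertices $p,q,r,s$ satisfy $\{p,q,r\}\in\ce$, $\{p,q,s\}\in\ce$, $\{p,r,s\}\notin\ce$, I will show that no two vertices lie on more than one line. The whole argument rests on recognizing that this assumption has a clean local reformulation. For a fixed vertex $x$, introduce the \emph{link graph} $G_x$ on $V\setminus\{x\}$ in which $y$ and $z$ are adjacent precisely when $\{x,y,z\}\in\ce$. Taking $p=x$ in the forbidden pattern, the hypothesis says exactly that whenever a vertex $q$ is adjacent in $G_x$ to both $r$ and $s$, then $r$ and $s$ are themselves adjacent; that is, $G_x$ contains no induced path on three vertices. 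A graph with this property is a disjoint union of cliques, so every connected component of $G_x$ is a clique.

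The second step is to read the lines off this structure. Since $\{x,y,w\}\in\ce$ is the same as $y\sim w$ in $G_x$, the line $\ov{xy}$ equals $\{x\}\cup(\{y\}\cup N_{G_x}(y))$, and because the component of $y$ in $G_x$ is a clique, this set is just $\{x\}$ together with the entire connected component of $y$ in $G_x$. This yields the key cancellation rule: if $c\in\ov{xy}$ and $c\neq x$, then $c$ lies in the same component of $G_x$ as $y$, whence $\ov{xc}=\ov{xy}$.

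The final step assembles these facts. Suppose, for contradiction, that two distinct vertices $x,y$ lie on two distinct lines; let $L$ be either of them and write $L=\ov{ab}$. If $x\in\{a,b\}$, say $x=a$, then applying the cancellation rule to $\ov{xb}=L$ with $c=y$ gives $L=\ov{xy}$. Otherwise $x\notin\{a,b\}$, and the rule first yields $L=\ov{ab}=\ov{ax}$, and then, reading $\ov{ax}$ as $\ov{xa}$ and applying the rule with $c=y$, gives $L=\ov{xy}$. Either way $L=\ov{xy}$; since this holds for both of the supposed distinct lines through $x$ and $y$, they must coincide, a contradiction. This establishes the contrapositive and hence the theorem.

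I expect the only real content to lie in the first step: seeing that the forbidden configuration is precisely the statement that every link graph $G_x$ is free of induced three-vertex paths, which collapses each line into $\{x\}$ plus a clique-component. Once that is in hand, the cancellation rule makes the uniqueness of lines a short formal check. The one point that demands care is handling both placements of $x$ relative to the defining pair $\{a,b\}$ of an arbitrary line $L$, since $L$ need not be presented in the form $\ov{x\,\cdot}$.
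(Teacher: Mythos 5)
Your proof is correct, and it takes a genuinely different route from the paper's. The paper argues directly: assuming $u,v$ lie on $\ov{uv}$ and on a second line $\ov{xy}$, it splits into two cases according to whether $\{x,y\}$ meets $\{u,v\}$, extracts a witness $z$ from the symmetric difference of the two lines when they share an endpoint, and reduces the remaining case to that one; the four offending vertices are produced by hand (the paper in fact verifies the superficially weaker conclusion that some four vertices carry at least two but not all four hyperedges, which is equivalent to the stated pattern after relabelling). You instead prove the contrapositive via a structural lemma: the absence of the pattern with $p=x$ says precisely that the link graph $G_x$ has no induced $P_3$, hence is a disjoint union of cliques, so every line $\ov{xy}$ is $\{x\}$ together with the component of $y$ in $G_x$. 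Your cancellation rule is sound (any $c\in\ov{xy}$ with $c\ne x$ lies in the clique-component of $y$), and the two-case bookkeeping on where $x$ sits relative to the defining pair of an arbitrary line $L$ --- the one delicate point, as you note --- is handled correctly, using the symmetry $\ov{ax}=\ov{xa}$ and two applications of the rule. Your route is slightly longer but buys more: it yields an explicit description of all lines through a fixed vertex and shows directly that \emph{every} line containing $x$ and $y$ equals $\ov{xy}$, i.e., it establishes the ``precisely one line'' reformulation recorded in \eqref{no2no3--dbe} rather than merely exhibiting the forbidden configuration. It is also worth observing that your $G_x$ is exactly the graph $\Sigma$ the paper later attaches to a vertex $p$ in the proof of Theorem~\ref{thm.no2} (where the weaker hypothesis gives only $P_4$-freeness and Seinsche's theorem replaces your union-of-cliques fact), so your method anticipates the paper's main technique even though its proof of Theorem~\ref{thm.no2no3} proceeds differently.
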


\begin{proof}
  Let $(V,\ce)$ be a $3$-uniform hypergraph. Assuming that two of its
  vertices, $u$ and $v$, belong not only to the line $\ov{uv}$, but
  also to some other line $\ov{xy}$, we will find distinct vertices
  $p,q,r,s$ such that at least two but not all four of
  $\{p,q,r\}$, $\{p,q,s\}$, $\{p,r,s\}$, $\{q,r,s\}$ belong to $\ce$.

\medskip

{\sc Case 1:} {\em One of $x,y$ is one of $u,v$.\/}\\
Symmetry lets us assume that $x=u$; now $\ov{uy}\ne \ov{uv}$ and
$v\in\ov{uy}$, and so $y\ne v$ and $\{u,v,y\} \in \ce$. Since
$\ov{uy}\ne \ov{uv}$, some vertex $z$ belongs to precisely one of
these two lines; since $\{u,v,y\} \in \ce$, the vertices $u,v,y,z$ are
all distinct. Since precisely one of $\{u,y,z\}$ and $\{u,v,z\}$
belongs to $\ce$, we may take $u,v,y,z$ for $p,q,r,s$.

\medskip

{\sc Case 2:} {\em $x,y,u,v$ are all distinct.\/}\\
Since $u\in\ov{xy}$ and $v\in\ov{xy}$, we have $\{u,x,y\} \in \ce$ and
$\{v,x,y\} \in \ce$, If $\{u,v,x\} \not\in \ce$ or $\{u,v,y\} \not\in
\ce$, then we may take $u,v,x,y$ for $p,q,r,s$; if $\{u,v,x\} \in \ce$
and $\{u,v,y\} \in\ce $, then we are back in Case 1 with $(v,x)$ in
place of $(x,y)$ if $\ov{uv}\ne\ov{vx}$ and with $(v,x)$ in place of
$(u,v)$ if $\ov{uv}=\ov{vx}$.
\end{proof}

\medskip

When $W\subseteq V$, the {\em sub-hypergraph of $(V,\ce)$ induced by
  $W$\/} is $(W,\cf)$ with $\cf$ consisting of all elements of $\ce$
that are subsets of $W$. In this terminology, Theorem~\ref{thm.no2no3} states that
\begin{equation}\label{no2no3--dbe}
\begin{array}{l}
\mbox{\em in a {\rm $3$}-uniform hypergraph, no sub-hypergraph induced}\\
\mbox{\em by four  vertices has two or three hyperedges if and only if}\\
\mbox{\em every two vertices belong to precisely one line.}
\end{array}
\end{equation}
We will say that a $3$-uniform hypergraph has the {\em De Bruijn -
  Erd\H os property\/} if it has at least as many distinct lines as it
has vertices or else one of its lines consists of all its vertices.
In this terminology, an immediate corollary of (\ref{no2no3--dbe}) and
the De Bruijn - Erd\H os theorem states that
\begin{equation}\label{no2no3dbe}
\begin{array}{l}
\mbox{\em if, in a {\rm $3$}-uniform hypergraph, no sub-hypergraph induced}\\
\mbox{\em by four  vertices has two or three hyperedges,}\\
\mbox{\em then the hypergraph has the De Bruijn - Erd\H os property.}
\end{array}
\end{equation}

Not every $3$-uniform hypergraph has the De Bruijn - Erd\H os
property: here is a $3$-uniform hypergraph $(V, \ce)$ with
$\abs{V}=11$ that has precisely ten distinct lines and none of these
lines equals $V$. Its vertex set $V$ is
\[
\{1,2\}\cup (\{a,b,c\}\times\{d,e,f\});
\]
its hyperedges are the $\binom{9}{3}$ three-point subsets of
$\{a,b,c\}\times\{d,e,f\}$ and the $18$ three-point sets
$\{i,(x_1,x_2),(y_1,y_2)\}$ with $x_i=y_i$; its ten lines are
\[
\begin{array}{l}
\{1,2\},\\
\{1,(a,d),(a,e),(a,f)\},\\
\{1,(b,d),(b,e),(b,f)\},\\
\{1,(c,d),(c,e),(c,f)\},\\
\{2,(a,d),(b,d),(c,d)\},\\
\{2,(a,e),(b,e),(c,e)\},\\
\{2,(a,f),(b,f),(c,f)\},\\
\{1\}\cup (\{a,b,c\}\times\{d,e,f\}),\\
\{2\}\cup (\{a,b,c\}\times\{d,e,f\}),\\
\{a,b,c\}\times\{d,e,f\}.
\end{array}
\]
We will refer to this hypergraph as $\cf_0$. It comes from Section 2
of \cite{CC}, which includes a construction of arbitrarily large {\rm
  $3$}-uniform hypergraphs on $n$ vertices with only
$\exp(O(\sqrt{\rule{0pt}{10pt}\log n}\,))$ distinct lines and no line consisting of all
$n$ vertices. All of these hypergraphs contain induced sub-hypergraphs isomorphic to 
$\cf_0$.

\medskip

\section{A generalization of  the De Bruijn - Erd\H{o}s theorem}\label{sec.no2}

Various generalizations of the De Bruijn - Erd\H{o}s theorem, or at
least of its first part, can be found in \cite{Bos, Rys, HP, FF84,
  Bab, Var, FF91, Sne, Ram, Cho, AMMV} and elsewhere. We offer a
generalization in a different spirit by strengthening
(\ref{no2no3dbe}): we will drop its assumption that no sub-hypergraph
induced by four vertices has three hyperedges. (As shown by the
hypergraph $\cf_0$ of the preceding section, the assumption that no
sub-hypergraph induced by four vertices has two hyperedges cannot be
dropped.)  This goes a long way towards generalizing the De Bruijn -
Erd\H{o}s theorem, but it does not quite get there: a description of
the extremal hypergraphs is also required. To provide this
description, we introduce additional notation and terminology.\\

We let $\binom{S}{3}$ denote the set of all three-point subsets of a
set $S$.  A {\em near-pencil\/} is a hypergraph $(V,\cl)$ such that
\[
\cl \;=\; \{V\setminus \{w\}\}\cup \{\{v,w\}: v\in V\setminus \{w\}\}\;\;\text{for some vertex $w$.}
\] 
We say that a $3$-uniform hypergraph $(V,\ce)$ {\em generates a
  near-pencil\/} if 
\[
\ce \;=\; \binom{V\setminus\{w\}}{3}\;\; \text{for some vertex $w$.}
\]
Clearly, this is the case if and only if the set $\cl$ of lines of
$(V,\ce)$ is such that $(V,\cl)$ is a near-pencil.

\medskip

A {\em finite projective plane\/} is a hypergraph $(V,\cl)$ in which,
for some integer $k$ greater than one, every two vertices belong to
precisely one hyperedge, $\abs{V}=k(k-1)+1$, and each hyperedge
contains precisely $k$ vertices.
We say that a $3$-uniform hypergraph $(V,\ce)$ {\em generates a
  finite projective plane\/} if, for some finite projective plane $(V,\cl)$, 
\[
\ce \;=\; \bigcup_{L\in\cl}\binom{L}{3}.
\]
Clearly, this is the case if and only if the set of lines of $(V,\ce)$
is $\cl$.

\medskip

The two extremal hypergraphs $(V,\cl)$ in the De Bruijn - Erd\H{o}s
theorem are exactly the near-pencil and the finite projective plane.

\medskip

We say that a $3$-uniform hypergraph $(V,\ce)$ is the {\em complement
  of a Steiner triple system\/} if every two of its vertices belong to
precisely one member of $\binom{V}{3}\setminus \ce$. Clearly, this is
the case if and only if the set of lines of $(V,\ce)$ is
$\{V\setminus\{x\}: x\in V\}$.\\

We use the graph-theoretic terminology and notation of Bondy and
Murty~\cite{BM}.  In particular,
\begin{itemize}
\item $P_n$ denotes the chordless path graph with $n$ vertices, 
\item $F+G$ denotes the disjoint union of graphs $F$ and $G$, 
\item $F\vee G$ denotes the {\em join\/} of graphs $F$ and $G$
  (defined as $F+G$ with additional edges that join every vertex of
  $F$ to every vertex of $G$).  
\end{itemize}
As usual, we call a graph {\em $F$-free\/} if it has no induced
subgraph isomorphic to graph $F$ and, when talking about sets, we use
the qualifier `maximal' as `maximal with respect to set-inclusion'
rather than as `largest'.

\begin{theorem}\label{thm.no2}
  Let $(V,\ce)$ be a $3$-uniform hypergraph on at least two vertices
  in which no four vertices induce two hyperedges and let $\cl$ be the
  set of lines of this hypergraph. If $V\not\in\cl$, then
  $\abs{\cl}\ge \abs{V}$, with equality if and only if $(V,\ce)$
  generates a near-pencil or a finite projective plane or is the
  complement of a Steiner triple system.
\end{theorem}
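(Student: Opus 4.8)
The plan is to split the analysis according to the equivalence~(\ref{no2no3--dbe}). Either no four vertices induce three hyperedges, in which case, together with the standing hypothesis that no four vertices induce two hyperedges, every two vertices lie on precisely one line, so $(V,\cl)$ is a linear space in which each line has between $2$ and $\abs V-1$ vertices (the upper bound because $V\notin\cl$); then the De Bruijn--Erd\H{o}s theorem applies verbatim and gives $\abs\cl\ge\abs V$ with equality exactly for the near-pencil and the finite projective plane. Or else some four vertices induce three hyperedges, and I would treat this as the genuinely new regime. Three of the four triples on a set $\{p,q,r,s\}$ are present precisely when they are the three triples through a common vertex (their \emph{apex}, namely the vertex missing from the absent triple), so this regime is governed by apex configurations. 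A reassuring preliminary check is that the near-pencil and the projective plane never produce a $4$-set with exactly three hyperedges, whereas the complement of a Steiner triple system produces only $4$-sets with three or four hyperedges; thus the dichotomy sorts the three extremal families correctly, with the third one confined to the second regime.

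The main tool I would use is the \emph{link} $G_v$ of a vertex $v$: the graph on $V\setminus\{v\}$ in which $ab$ is an edge iff $\{v,a,b\}\in\ce$. Then $\ov{va}=\{v\}\cup N_{G_v}[a]$, so the lines through $v$ correspond bijectively to the distinct closed neighbourhoods of $G_v$, and $V\notin\cl$ says exactly that no $G_v$ has a universal vertex. The forbidden ``two hyperedges'' condition translates, for any three vertices $a,b,c$ of $G_v$, into the rule that the number of edges among $\{a,b,c\}$ is never $2$ when $\{a,b,c\}\notin\ce$ and never $1$ when $\{a,b,c\}\in\ce$; in particular the vertex set of every induced $P_3$ of $G_v$ is a hyperedge. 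For the inequality $\abs\cl\ge\abs V$ I would run the linear-algebraic rank proof uniformly: let $N$ be the vertex--line incidence matrix and show it has full row rank by proving that any real weighting $z$ with $\sum_{p\in L}z_p=0$ for every line $L$ must vanish. Restricting to the lines through a fixed $v$ shows that in each link $G_v$ all closed-neighbourhood sums of $z$ equal the constant $-z_v$, and I would exploit this rigid local condition, together with the link structure above, to force $z=0$ and hence $\mathrm{rank}(N)=\abs V\le\abs\cl$.

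The equality case is where I expect the real difficulty. In the first regime it is handed to us by De Bruijn--Erd\H{o}s. In the second regime I would argue that $\abs\cl=\abs V$ forces every link $G_v$ to be extremely rigid: an apex configuration makes the links dense, the ban on universal vertices keeps every line off $V$, and the ``no two hyperedges'' condition, fed back through the induced $P_3$'s, should pin each $G_v$ down to the complete graph minus a perfect matching (the link that actually occurs in the complement of a Steiner triple system, and which is $P_4$-free). Once every link has this form, the lines through $v$ are exactly the sets $V\setminus\{x\}$ with $x\ne v$, so $\cl=\{V\setminus\{x\}:x\in V\}$, and by the characterisation of complements of Steiner triple systems given above, $(V,\ce)$ is the complement of a Steiner triple system. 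The main obstacle is thus the rigidity step in this second regime: ruling out every intermediate link, so that a multi-line pair together with $\abs\cl=\abs V$ leaves only the complete-graph-minus-matching links. This is precisely where I would expect to need the vocabulary of joins $\vee$, disjoint unions $+$, and $F$-free graphs—classifying the admissible links as cographs built from single vertices by $\vee$ and $+$ (equivalently, the $P_4$-free graphs) and then cutting that list down to the complete graph minus a perfect matching.
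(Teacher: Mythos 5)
Your skeleton identifies the right objects---the link graphs, their $P_4$-freeness, the cograph operations $+$ and $\vee$, and the correct sorting of the three extremal families between the two regimes---and your first regime does coincide with an application of the De Bruijn--Erd\H{o}s theorem much as in the paper (though the paper phrases its Case 1 via \emph{maximal} lines, which lets DBE apply even when a pair lies on several nested lines, a situation your dichotomy lumps into the hard regime). But both load-bearing steps of your plan are missing. For the inequality, your claim that the vertex--line incidence matrix $N$ has full row rank is never argued: you state the local condition that all closed-neighbourhood sums of $z$ in $G_v$ equal $-z_v$ and then say you ``would exploit'' it, but nothing indicates how this forces $z=0$, and if it worked uniformly it would constitute a new linear-algebraic proof subsuming the DBE inequality for these line sets---a substantial claim that cannot be waved through. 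The paper proves $\abs{\cl}\ge\abs{V}$ quite differently, by induction on $n$: it chooses $p$ to be a vertex lying on \emph{two maximal lines}, decomposes its link $\Sigma$ as a disjoint union $\Sigma_1+\cdots+\Sigma_k$ or a join $\Sigma_1\vee\cdots\vee\Sigma_k$ (Seinsche's theorem), proves that lines respect the blocks $W_i=V_i\cup\{p\}$ (in the union case a line with both ends in $W_i$ stays inside $W_i$; in the join case it equals $Z\cup(V\setminus W_i)$ for a line $Z$ of the induced sub-hypergraph $\ch_i$), and counts lines block by block. The specific choice of $p$ is essential---it is what guarantees some $W_r$ is not itself a line, which the counting in the union case needs to reach the strict inequality $\abs{\cl}>n$---and your plan of analyzing arbitrary links $G_v$ has no substitute for it.

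The second gap is the one you yourself flag: the rigidity step at equality. Asserting that $\abs{\cl}=n$ forces every link to be a complete graph minus a perfect matching is the theorem, not a lemma; it is not a pointwise consequence of the cograph classification of links, and no amount of local analysis of a single $G_v$ will yield it, because it emerges only from the global line count. In the paper this is extracted in the join subcase through a chain your sketch has no counterpart for: equality forces each block $\ch_i$ to have exactly $\abs{W_i}$ lines with $V_i$ among them; the induction hypothesis then applies to $\ch_i$, and the projective-plane option is excluded because $V_i$ is a line missing only one vertex, while the Steiner option is excluded because $\Sigma_i$ is disconnected---leaving $\ce_i=\binom{V_i}{3}$; from this one manufactures, for each vertex $x$, a line equal to $V\setminus\{x\}$ (via a four-vertex exchange argument using a witness $w\notin W_i$ of $\ov{xz}\ne V$), and only then does $\abs{\cl}=n$ pin down $\cl=\{V\setminus\{x\}:x\in V\}$. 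Your final inference---all links complete-minus-matching implies $\cl=\{V\setminus\{x\}:x\in V\}$ implies complement of a Steiner triple system---is correct, but everything upstream of it is the actual work. So: right vocabulary and right extremal picture, but the two central arguments (the counting in the multi-line regime and the rigidity at equality) are genuine gaps, and the linear-algebraic route you propose for the first is unsupported and quite possibly unworkable.
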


\begin{proof}
  Let $\ch$ denote the hypergraph and let $n$ denote the number of its
  vertices. We will use induction on $n$. The induction basis, $n=
  2$, is trivial; in the induction step, we distinguish between two
  cases. We may assume that $V\not\in\cl$.

\bigskip

{\sc Case 1:} {\em Every two vertices of $\cal H$ belong to precisely one maximal line.\/}\\
Let $\cl^{\max}$ denote the set of maximal lines of $\ch$. The De
Bruijn - Erd\H{o}s theorem guarantees that $\abs{\cl^{\max}}\ge n$,
with equality if and only if $(V,\cl^{\max})$ is a near-pencil or a
finite projective plane.  Since $\cl \supseteq \cl^{\max}$, we have
$\abs{\cl}\ge \abs{\cl^{\max}}\ge n$; if $\abs{\cl}=n$, then $\cl
=\cl^{\max}$, and so $(V,\cl)$ is a near-pencil or a finite projective
plane.\\

{\sc Case 2:} {\em Some two vertices of $\cal H$ belong to more than one maximal line.\/}\\
Let $p$ denote one of these two vertices and let
$\Sigma$ denote the graph with vertex set $V\setminus\{p\}$, where
vertices $u,v$ are adjacent if and only if $\{p,u,v\}\in\ce$. Since
$p,u,v,w$ do not induce two hyperedges,
\begin{subclaim} \label{claim.tripleSigma}
$u,v,w$ induce two edges in $\Sigma$ $\;\Ra \;\{u,v,w\}\in\ce$,\\
$u,v,w$ induce one$\,$ edge\phantom{s} in $\Sigma$ $\;\Ra \;\{u,v,w\}\not\in\ce$.
\end{subclaim}
A theorem of Seinsche~\cite{Sei} states that every connected
$P_4$-free graph with more than one vertex has a disconnected
complement; property \ref{claim.tripleSigma}. of $\Sigma$ guarantees that it is
$P_4$-free (if it contained an induced $P_4$, then the four
vertices of this $P_4$ would induce two hyperedges, a contradiction);
it follows that
\begin{subclaim} \label{claim.disconnected.complement}
every connected induced subgraph of $\Sigma$ with more than one vertex\\
has a disconnected complement.
\end{subclaim}
Having established \ref{claim.tripleSigma} and \ref{claim.disconnected.complement}, we distinguish between two subcases.\\

{\sc Subcase 2.1:} {\em $\Sigma$ is disconnected.\/}\\
In this subcase, we will prove that $\abs{\cl}>n$. To begin, the
assumption of this subcase means that
\[
\Sigma=\Sigma_1+ \Sigma_2+ \ldots + \Sigma_k, 
\]
where $k\ge 2$ and each $\Sigma_i$ is connnected; it follows from
\ref{claim.disconnected.complement} that each $\Sigma_i$ is either a
single vertex or has a disconnected complement.  For each $i=1,\ldots
,k$, let $V_i$ denote the vertex set of $\Sigma_i$ and let $W_i$
denote $V_i\cup\{p\}$.  We claim that
\begin{subclaim} \label{claim.Wi.line}
$x,y\in W_i$, $x\ne y$, $z\in V\setminus W_i$ $\;\Ra \;\{x,y,z\}\not\in\ce$.
\end{subclaim} 
When one of $x,y$ is $p$, the conclusion follows from the fact that
all vertices in $V_i$ are nonadjacent in $\Sigma$ to all vertices in
$V\setminus W_i$.  When $x,y$ are adjacent vertices of $\Sigma_i$, the
conclusion follows from the same fact, combined with
\ref{claim.tripleSigma}.  When $x,y$ are nonadjacent vertices of
$\Sigma_i$, consider a shortest path $P$ from $x$ to $y$ in
$\Sigma_i$. Since $x$ and $y$ are nonadjacent and $\Sigma$ is
$P_4$-free, $P$ has exactly three vertices.  Let $w$ be the unique
interior vertex of $P$. Now \ref{claim.tripleSigma} implies that
$\{x,y,w\}\in\ce$, $\{x,z,w\}\not\in\ce$, $\{y,z,w\}\not\in\ce$; in
turn, the fact that $x,y,z,w$ do not induce two hyperedges implies
that $\{x,y,z\}\not\in\ce$.

\medskip

Let $\ch_i$ denote the sub-hypergraph of $\ch$ induced by $W_i$. In the inductive argument, 
we shall use the following restatement of \ref{claim.Wi.line}:
\begin{subclaim} \label{claim.lines.Hi.conn} 
  $u,v\in W_i$, $u\ne v$ $\;\Ra \;$ the line $\ov{uv}$ in $\ch_i$
  equals the line $\ov{uv}$ in $\ch$.
\end{subclaim} 
Another way of stating \ref{claim.Wi.line} is 
\begin{subclaim} \label{claim.Wi.line.corr}
$u\in V_i$, $v\in V_j$, $i\ne j$ $\;\Ra \;\ov{uv}\cap (W_i\cup W_j)=\{u,v\}$. 
\end{subclaim} 
(The conclusion of \ref{claim.Wi.line.corr} can be strengthened to 
$\abs{\ov{uv}\cap W_s}\le 1$ for all $s$, but this is irrelevant to our argument.)

\medskip 

Next, let us show that 
\begin{subclaim} \label{claim.zwWj} $p\in \ov{uv}$
$\;\Ra \;\ov{uv}\subseteq W_i$ for some $i$.
\end{subclaim} 
Since $u$ and $v$ are distinct, we may assume that $u\neq p$, and so
$u\in V_i$ for some $i$. We claim that $v\in W_i$. If $v=p$, then this
is trivial; if $v\ne p$, then $p\in \ov{uv}$ implies that $u$ and $v$
are adjacent in $\Sigma$, and so $v\in V_i$. Now $u,v\in W_i$, and so
$\ov{uv}\subseteq W_i$ by \ref{claim.lines.Hi.conn}. 

\medskip

From \ref{claim.zwWj}, we will deduce that 
\begin{subclaim} \label{claim.extra1} 
$W_r\not\in \cl$ for some $r$.
\end{subclaim} 
By assumption, there is a vertex $y$ other than $p$ such that $p$ and
$y$ belong to at least two maximal lines of $\ch$; this vertex $y$
belongs to some $V_r$; by \ref{claim.zwWj}, every line containing both
$p$ and $y$ must be a subset of $W_r$; since at least two maximal
lines contain both $p$ and $y$, it follows that $W_r\not\in \cl$.

\medskip 

With $S$ standing for the set of subscripts $i$ such that $W_i$ is a
line of $\ch_i$, facts \ref{claim.lines.Hi.conn} and
\ref{claim.extra1} together show that$\abs{S}\le k-1$, and so we may
distinguish between the following three subcases:

\medskip

{\sc Subcase 2.1.1:} $\abs{S}=0$.\\ By the induction hypothesis, each
$\ch_i$ has at least $\abs{W_i}$ distinct lines; by
\ref{claim.lines.Hi.conn}, each of these lines is a line of $\ch$; since
$\abs{W_i\cap W_j}=1$ whenever $i\ne j$, all of these lines with
$i=1,\ldots ,k$ are distinct; it follows that
$\abs{\cl}\;\ge \;\sum_{i=1}^k\abs{W_i}\;=\;n+k-1\;>\;n$.

\medskip

{\sc Subcase 2.1.2:} $\abs{S}=1$.\\ We may assume that $S=\{1\}$. Now
$W_1$ is a line of $\ch$.  By \ref{claim.Wi.line.corr}, the
$\abs{V_1}\cdot\abs{V_2}$ lines $\ov{uv}$ of $\ch$ with $u\in V_1$,
$v\in V_2$ are all distinct; since they have nonempty intersections
with $V_2$, they are distinct from $W_1$. By the induction hypothesis,
each $\ch_i$ with $i\ge 2$ has at least $\abs{W_i}$ distinct lines; by
\ref{claim.lines.Hi.conn}, each of these lines is a line of $\ch$;
since $\abs{W_i\cap W_j}=1$ whenever $i\ne j$, all of these lines with
$i=2,\ldots ,k$ are distinct; since they are disjoint from $V_1$, they
are distinct from $W_1$ and from all $\ov{uv}$ with $u\in V_1$, $v\in
V_2$.  It follows that $\abs{\cl}\;\ge\;1+\sum_{i=2}^k\abs{W_i}+
\abs{V_1}\!\cdot\!\abs{V_2}\ge\;1+\sum_{i=2}^k\abs{W_i}+ \abs{V_1}
\;=\;n+k-1\;>\;n$.

\medskip

{\sc Subcase 2.1.3:} $2\le \abs{S}\le k-1$.\\
Let $W^\star$ denote $\bigcup_{i\in S}W_i$ and let $\ch^\star$ denote
the sub-hypergraph of $\ch$ induced by $W^\star$. From
\ref{claim.zwWj} and the assumption $\abs{S}\ge 2$, we deduce that no
line of $\ch$ contains $W^\star$. By the induction hypothesis,
$\ch^\star$ has at least $1+\sum_{i\in S}\abs{V_i}$ distinct lines; it
follows that $\ch$ has at least $1+\sum_{i\in S}\abs{V_i}$ distinct
lines $\ov{uv}$ with $u,v\in W^\star$. By the induction hypothesis,
each $\ch_i$ with $i\not\in S$ has at least $\abs{W_i}$ distinct
lines; by \ref{claim.lines.Hi.conn}, each of these lines is a line of
$\ch$; since $\abs{W_i\cap W_j}=1$ whenever $i\ne j$, all of these
lines with $i\not\in S$ are distinct; since they are disjoint from
$W^\star\setminus \{p\}$, they are distinct from all $\ov{uv}$ with
$u,v\in W^\star$.  It follows that $\abs{\cl}\;\ge\;1+\sum_{i\in
  S}\abs{V_i}+\sum_{i\not\in S}\abs{W_i}\;=\;n+(k-\abs{S})\;>\;n$.

\bigskip

{\sc Subcase 2.2:} {\em $\Sigma$ is connected.\/}\\
By \ref{claim.disconnected.complement}, the assumption of this subcase
implies that $\Sigma$ has a disconnected complement. This means that
\[
\Sigma=\Sigma_1\vee \Sigma_2\vee \ldots \vee \Sigma_k,
\]
where $k\ge 2$ and each $\Sigma_i$ has a connnected complement; it
follows from \ref{claim.disconnected.complement} that each $\Sigma_i$
is either a single vertex or a disconnected graph.  For each
$i=1,\ldots ,k$, let $V_i$ denote the vertex set of $\Sigma_i$ and let
$W_i$ denote $V_i\cup\{p\}$.  We claim that
\begin{subclaim} \label{claim.Wi.line.super}
$x,y\in W_i$, $x\ne y$, $z\in V\setminus W_i$ $\;\Ra \;\{x,y,z\}\in\ce$.
\end{subclaim} 
When one of $x,y$ is $p$, the conclusion follows from the fact that
all vertices in $V_i$ are adjacent in $\Sigma$ to all vertices in
$V\setminus W_i$.  When $x,y$ are nonadjacent vertices of $\Sigma_i$,
the conclusion follows from the same fact, combined with \ref{claim.tripleSigma}. When
$x,y$ are adjacent vertices of $\Sigma_i$, consider a shortest path
$P$ from $x$ to $y$ in the complement of $\Sigma_i$. Since $\Sigma_i$
is $P_4$-free, its complement is $P_4$-free; it follows that $P$ has
exactly three vertices. Let $w$ be the unique interior vertex of
$P$. Now \ref{claim.tripleSigma} implies that $\{x,y,w\}\not\in\ce$,
$\{x,z,w\}\in\ce$, $\{y,z,w\}\in\ce$; in turn, the fact that $x,y,z,w$
do not induce two hyperedges implies that $\{x,y,z\}\in\ce$.

\medskip

Let $\ch_i$ denote the sub-hypergraph of $\ch$ induced by $W_i$. In the inductive argument, 
we shall use the following restatement of \ref{claim.Wi.line.super}:
\begin{subclaim} \label{claim.lines.Hi.disconn} 
  $u,v\in W_i$, $u\ne v$ $\;\Ra \;$ the line $\ov{uv}$ in $\ch$ equals
  $Z\,\cup\, (V\setminus W_i)$, where $Z$ is the line $\ov{uv}$ in
  $\ch_i$.
\end{subclaim} 
Fact \ref{claim.lines.Hi.disconn} implies that 
\begin{subclaim} \label{claim.nonperverse} 
no line of $\ch_i$ equals $W_i$;
\end{subclaim}
in turn, the induction hypothesis applied to $\ch_i$ guarantees that
it has at least $\abs{W_i}$ distinct lines; now
\ref{claim.lines.Hi.disconn} implies that
\begin{subclaim} \label{claim.lines.Hi.extra} 
$\ch$ has at least $\abs{W_i}$ distinct lines $\ov{uv}$ with $u,v\in W_i$.
\end{subclaim} 

\medskip 

In addition, \ref{claim.lines.Hi.disconn} implies that 
\begin{subclaim} \label{claim.Wi.line.super.corr}
$u,v\in W_i$, $x,y\in W_j$, $i\ne j$, $\ov{uv}=\ov{xy}$  
$\;\;\Ra \;\;\ov{uv}=\ov{xy}=V\setminus \{p\}$.
\end{subclaim} 

\medskip

{\sc Subcase 2.2.1:} {\em $V\setminus \{p\}\not\in\cl$.\/} In this
subcase, \ref{claim.Wi.line.super.corr} guarantees that 
\[
u,v\in W_i,\; x,y\in W_j,\; i\ne j 
\;\Ra \;\ov{uv}\ne \ov{xy},
\]
and so \ref{claim.lines.Hi.extra} implies that 
$\abs{\cl}\;\ge\;\sum_{i=1}^k\abs{W_i}\;=\;n+k-1\;>\;n$.\\

{\sc Subcase 2.2.2:} {\em $V\setminus \{p\}\in \cl$.\/} Fact 
\ref{claim.lines.Hi.extra} guarantees that $\ch$ has at least
$\abs{W_i}-1$ distinct lines $\ov{uv}$ such that $u,v\in W_i$ and
$\ov{uv}\ne V\setminus \{p\}$, and so \ref{claim.Wi.line.super.corr},
combined with the assumption of this subcase, implies that
$\abs{\cl}\;\ge\; \sum_{i=1}^k(\abs{W_i}-1)+1\;=\;n$.

\medskip

To complete the analysis of this subcase, let us consider its extremal
hypergraphs, those with $\abs{\cl}=n$.  Here, 
\begin{subclaim} \label{claim.extreme} each $\ch_i$ has precisely
  $\abs{W_i}$ distinct lines and $V_i$ is one of these lines;\\
  $\cl$ consists of all the sets $Z\,\cup\, (V\setminus W_i)$ such that $Z$ is a line of some $\ch_i$.
\end{subclaim}
We are going to prove that
\begin{subclaim} \label{claim.Hi.near.pencil}
the hyperedge set $\ce_i$ of each $\ch_i$ is $\binom{V_i}{3}$
\end{subclaim}
Since $V_i$ is a line of $\ch_i$, it has at least two vertices.  If
$|V_i| = 2$, then both \ref{claim.Hi.near.pencil} and
\ref{claim.nonperverse} amount to saying that $\ch_i$ has no
hyperedges.  Now we will assume that $|V_i| \ge 3$.  The induction
hypothesis, combined with \ref{claim.nonperverse}, guarantees that
$\ch_i$ generates a near-pencil or a finite projective plane or is the
complement of a Steiner triple system; since $V_i$ is one of the lines
of $\ch_i$, proving \ref{claim.Hi.near.pencil} amounts to proving that
$\ch_i$ generates a near-pencil. The possibility of $\ch_i$ generating
a finite projective plane is excluded by the fact that one of the
lines of $\ch_i$ (namely, $V_i$) includes all the vertices but one.
The possibility of $\ch_i$ being the complement of a Steiner triple
system is excluded by the fact that $\Sigma_i$ is disconnected, and so
it includes vertices $u,v,w$ such that $u$ is nonadjacent to both $v,
w$: now $u$ and $p$ belong to at least two members of
$\binom{W_i}{3}\setminus \ce_i$ (namely, $\{u,v,p\}$ and $\{u,w,p\}$).
This completes our proof of \ref{claim.Hi.near.pencil}.

\medskip

Next, let us prove that 
\begin{subclaim} \label{claim.new1} for every $i=1,2,\ldots ,k$ and
  every $x$ in $V_i$, there is an $L$ in $\cl$ such that $V_i\setminus
  L=\{x\}$.
\end{subclaim}
Choose any vertex $z$ in $V\setminus W_i$. Since $\ov{xz}\ne V$, there
is a $w$ in $V$ such that $w\ne x$, $w\ne z$, and
$\{x,z,w\}\not\in\ce$; fact \ref{claim.Wi.line.super} implies that
$w\not\in W_i$. Next, consider an arbitrary vertex $y$ in
$V_i\setminus \{x\}$. Fact \ref{claim.Wi.line.super} guarantees that
$\{x,y,z\}\in\ce$ and $\{x,y,w\}\in\ce$; in turn, the fact that
$x,y,z,w$ do not induce two hyperedges implies that $\{y,z,w\}\in\ce$.
We conclude that $y\in\ov{zw}$, and so $V_i\setminus \ov{zw}=\{x\}$,
which completes our proof of \ref{claim.new1}. 

\medskip

Finally, let us prove that 
\begin{subclaim} \label{claim.new2} 
$V\setminus \{x\}\in \cl$ for all $x$ in $V$.
\end{subclaim}
Since $V\setminus \{p\}\in \cl$ by assumption of this subcase, we may
restrict our argument to vertices $x$ distinct from $p$. Every such
$x$ belongs to some $V_i$ and, by \ref{claim.new1}, there is an $L$ in
$\cl$ such that $V_i\setminus L=\{x\}$; by \ref{claim.extreme}, there
are a subscript $j$ and a line $Z$ of $\ch_j$ such that
$L\;=\;Z\,\cup\, (V\setminus W_j)$. Now $V_i\not\subseteq L$ and
$V_r\subseteq L$ whenever $r\ne j$, and so $j=i$. By
\ref{claim.Hi.near.pencil}, every line of $\ch_i$ either equals $V_i$
or includes $p$; since $V_i\setminus Z=V_i\setminus L=\{x\}$, it
follows that $p\in Z$. Since $V_i\setminus Z=\{x\}$ and $p\in Z$
together imply that $Z=W_i\setminus\{x\}$, we conclude that
$L=V\setminus \{x\}$. This completes our proof of \ref{claim.new2}.

\medskip

Since $\abs{\cl}=n$, fact \ref{claim.new2} guarantees that $\cl$
consists of the $n$ sets $V\setminus \{x\}$ with $x$ ranging over
$V$. This means that for every two vertices $u$ and $v$, there is a
unique vertex in $V\setminus\ov{uv}$, which is just another way of
saying that $\ch$ is the complement of a Steiner triple system.
\end{proof}

\section{Metric and pseudometric hypergraphs}\label{sec.met}

We say that a $3$-uniform hypergraph $(V,\ce)$ is {\em metric\/}
if there is a metric space $(V,\dist)$ such that 
\[
\mbox{ $\ce= \{\{u,v,w\}: u,v,w$ are all distinct and 
$\dist(u,v)+\dist(v,w)=\dist(u,w)\}$. }
\]
Chen and Chv\' atal~\cite{CC} asked whether or not all metric
hypergraphs have the De Bruijn - Erd\H os property; this question was
investigated further by Chiniforooshan and Chv\'{a}tal~\cite{CC2}. 

\medskip

All induced sub-hypergraphs of metric hypergraphs are metric, and so
metric hypergraphs can be characterized as hypergraphs without certain
induced sub-hypergraphs, namely, the minimal non-metric ones.  If
there are only finitely many minimal non-metric hypergraphs, then
metric hypergraphs can be recognized in polynomial time.  However, it
is conceivable that there are infinitely many minimal non-metric
hypergraphs and it is not clear whether metric hypergraphs can be
recognized in polynomial time. 

\medskip 

In this section, we will list three minimal non-metric hypergraphs. To
begin, we will prove that the hypergraphs without the De Bruijn -
Erd\H os property mentioned in Section~\ref{sec.intro} cannot provide
a negative answer to the Chen--Chv\' atal question. All of these
hypergraphs contain the $11$-vertex hypergraph denoted $\cf_0$ in
Section~\ref{sec.intro}. We will prove that $\cf_0$ is not metric. In
fact, we will prove that it contains an $8$-vertex induced
sub-hypergraph $\cf_1$, which is minimal non-metric. The vertex set of
$\cf_1$ is $\{1,2\}\cup (\{a,b,c\}\times\{d,e\})$; its hyperedges are
the $\binom{6}{3}$ three-point subsets of $\{a,b,c\}\times\{d,e\}$ and
the nine three-point sets $\{i,(x_1,x_2),(y_1,y_2)\}$ with $x_i=y_i$.

\medskip

We will also prove that no complement of a Steiner triple system with
more than three vertices is metric.  In fact, we will exhibit
$6$-vertex minimal non-metric hypergraphs $\cf_2$ and $\cf_3$ such
that every complement of a Steiner triple system with more than three
vertices contains at lest one of $\cf_2$ and $\cf_3$.

\medskip

A ternary relation $\cb$ on a set $V$ is called a {\em metric betweenness\/} 
if there is a metric $\dist$ on $V$ such that $(u,v,w)\in \cb$ if and only if 
\[
\mbox{ $u,v,w$ are all distinct and 
$\dist(u,v)+\dist(v,w)=\dist(u,w)$. }
\]
Menger \cite{Men} seems to have been the first to study this
relation. He proved that, in addition to the obvious properties 
\begin{tabbing}
xx\=(M12)x\=\kill
\>(M0)\>if $(u,v,w)\in \cb$, then $u,v,w$ are three 
 points,\\
\>(M1)\>if $(u,v,w)\in \cb$, then $(w,v,u)\in \cb$,\\
\>(M2)\>if $(u,v,w)\in \cb$, then $(u,w,v)\not\in \cb$,
\end{tabbing}
every metric betweenness $\cb$ has the property 
\begin{tabbing}
xx\=(M12)x\=\kill
\>(M3)\>if $(u,v,w),\,(u,w,x)\in \cb$, then
$(u,v,x),\,(v,w,x)\in \cb$.
\end{tabbing}
We will call a ternary relation $\cb$ on a set $V$ a {\em
  pseudometric betweenness\/} if it has properties (M0), (M1), (M2),
(M3). Not every pseudometric betweenness is a metric betweenness:
see~\cite{Chv} for more on this subject.

\medskip

Every ternary relation $\cb$ on a set $V$ that has property (M0) gives
rise to a hypergraph $(V,\ce(\cb))$ by discarding the order on each
triple in $\cb$:
\[
\ce(\cb)=\{\{u,v,w\}:\,(u,v,w)\in \cb\}.
\]
We will say that a $3$-uniform hypergraph $(V,\ce)$ is {\em
  pseudometric\/} if there is a pseudometric betweenness $\cb$ on $V$
such that $\ce=\ce(\cb)$. Every metric hypergraph is pseudometric, but
the converse is false: the {\em Fano hypergraph\/} is pseudometric but
it is not metric.  (This hypergraph has seven vertices and seven
hyperedges, every two of which share a single vertex; like all
$3$-uniform hypergraphs in which no two hyperedges share two vertices,
it is pseudometric; it has been proved \cite{Chv,Che} that it is not
metric, but neither of the two proofs is very short.)  We will prove
that $\cf_1$, $\cf_2$, $\cf_3$ are not even pseudometric. (There are
many other minimal non-pseudometric hypergraphs: our computer search
revealed 113 non-isomorphic ones on six vertices.)

\begin{question}
  True or false? All pseudometric hypergraphs have the De Bruijn -
  Erd\H os property.
\end{question}

\medskip

In proving that $\cf_1$ is not pseudometric, we shall rely on the
following fact.
\begin{lemma}\label{lem.trique}
  If $\cb$ is a pseudometric betweenness on a set $V$ such that
  $\ce(\cb)=\binom{V}{3}$ and $\abs{V}\ge 5$, then there is an
  injection $f:V\ra {\bf R}$ such that $(x,y,z)\in \cb$ if and only if
  $f(y)$ is between $f(x)$ and $f(z)$.
\end{lemma}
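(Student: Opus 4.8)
The plan is to show that the betweenness $\cb$ is exactly the betweenness of a linear order on $V$; once $V$ is strictly ordered as $v_1\prec v_2\prec\cdots\prec v_n$ so that $(x,y,z)\in\cb$ holds precisely when $y$ lies strictly between the other two in $\prec$, the rank function $f(v_i)=i$ is the required injection into $\bf R$. The first step is routine: from (M0), (M1), (M2) together with $\ce(\cb)=\binom{V}{3}$ one checks that every three distinct vertices have a unique \emph{middle}, i.e.\ for each triple exactly one of its orderings (up to the reversal allowed by (M1)) lies in $\cb$. Thus $\cb$ is encoded by a ``middle'' function on triples, and the whole problem is to realise this function by a linear order.

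The engine I would use is the following reduction. Suppose some vertex $e_0$ is the middle of no triple. Then for distinct $x,y\in V\setminus\{e_0\}$ the middle of $\{e_0,x,y\}$ is $x$ or $y$, so I may define $x\prec y$ when $(e_0,x,y)\in\cb$; trichotomy and irreflexivity are immediate from uniqueness of the middle, and transitivity is free from (M3): if $(e_0,x,y),(e_0,y,z)\in\cb$ then (M3) with $u=e_0$ yields both $(e_0,x,z)\in\cb$ (so $x\prec z$) \emph{and} $(x,y,z)\in\cb$. Declaring $e_0$ the $\prec$-minimum makes $\prec$ a strict total order on $V$, and the displayed conclusion $(x,y,z)\in\cb$ shows at once that every triple is ordered by $\prec$ exactly as $\cb$ orders it; by uniqueness this accounts for all of $\cb$. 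So the entire lemma reduces to producing one vertex that is never a middle.

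To find such a vertex I would introduce the \emph{neighbour graph} $N$ on $V$, joining $x$ and $y$ when no vertex is between them (that is, $(x,z,y)\notin\cb$ for all $z$). Three clean facts follow from (M3) and uniqueness alone, with no size hypothesis. (a) $N$ is triangle-free: three mutually adjacent vertices would leave the triple $\{a,b,c\}$ with no possible middle. (b) $N$ has maximum degree at most $2$: if $x$ had neighbours $a,b,c$ then, since nothing lies between $x$ and a neighbour, the middle of each of $\{x,a,b\},\{x,a,c\},\{x,b,c\}$ must be $x$, so $x$ is between every pair; taking the true middle $b$ of $\{a,b,c\}$ and feeding $(a,x,b)\in\cb$ and $(a,b,c)\in\cb$ into (M3) gives $(x,b,c)\in\cb$, contradicting $(b,x,c)\in\cb$. (c) Conversely a vertex of degree $\ge2$ is the middle of the triple formed by two of its neighbours, so the only candidates for a never-middle vertex are the vertices of degree at most one. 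Hence $N$ is a disjoint union of paths and cycles of length $\ge4$.

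The crux, and the step where $\abs{V}\ge5$ is indispensable, is to rule out the cyclic components and establish connectedness, so that $N$ is a single Hamiltonian path whose endpoints are never middles, finishing the argument via the $\prec$-construction above. This is exactly what cannot be done for four vertices: the cyclic betweenness on $\{1,2,3,4\}$ with middles $2,3,4,1$ of $\{1,2,3\},\{2,3,4\},\{3,4,1\},\{4,1,2\}$ satisfies (M0)--(M3), has no never-middle vertex, and makes $N$ a $4$-cycle. I expect this cycle-exclusion to be the main obstacle. The approach I would take is: given a cycle $v_1v_2\cdots v_mv_1$ in $N$ we have $(v_{i-1},v_i,v_{i+1})\in\cb$ for every $i$, and when $m\ge5$ there are triples on non-consecutive vertices; combining a consecutive relation with such a ``spread'' relation through (M3) forces a middle that violates uniqueness, precisely as a relation placing some $v_{i-1}$ as the middle of a triple through $v_{i+1}$ would collapse the consecutive structure. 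The same overlapping-transitivity mechanism, applied to cross triples, is what forbids $N$ from splitting into several components once every triple carries a middle. Making this case analysis uniform in $m$ and in the undetermined middles of the spread triples is the delicate part; everything before and after it is the soft bookkeeping described above.
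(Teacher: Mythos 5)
Your reduction engine is sound: the verification that a never-middle vertex $e_0$ yields, via $x\prec y \Leftrightarrow (e_0,x,y)\in\cb$, a strict total order whose betweenness is exactly $\cb$ is correct (the (M3) computation gives both transitivity and $(x,y,z)\in\cb$, and uniqueness of middles gives the converse), and your facts (a) and (b) about the neighbour graph $N$ check out. But the proposal stops exactly where the content of the lemma lies. First, the cycle-exclusion step --- the only place $\abs{V}\ge 5$ enters --- is not proved but only conjectured: you say the ``overlapping-transitivity mechanism'' applied to spread triples should force a contradiction and admit that making the case analysis work ``is the delicate part.'' Note that (M3) is an \emph{inner} transitivity whose two premises must share their first coordinate in the nested pattern $(u,v,w),(u,w,x)$; the cyclic relations $(v_{i-1},v_i,v_{i+1})$ alone therefore generate nothing under (M3), and the contradiction must be dug out of the undetermined middles of the non-consecutive triples, each of which a priori admits two placements (the middle of a triple containing an $N$-edge must be an endpoint of that edge, but which endpoint is open). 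Your own $4$-cycle model shows this step cannot be soft. Second, you assert that the endpoints of the resulting path are never middles, but your fact (c) only shows degree $\le 1$ is \emph{necessary} for being never-middle; the converse needs an argument. It is fixable by a finite descent --- from $(u,x,w)\in\cb$ and $z$ between $x$ and $u$, (M3) applied to $(u,z,x),(u,x,w)$ gives $(z,x,w)$, so both $u$ and $w$ can be driven down to $N$-neighbours of $x$, and a degree-one vertex then yields $(a,x,a)$, contradicting (M0) --- but as written this step is missing too. (Your connectedness requirement, by contrast, is unnecessary: a single never-middle vertex suffices, so once cycles are excluded any path endpoint finishes the job.)

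For comparison, the paper sidesteps your global structure theory for $N$ entirely and inducts on $\abs{V}$: it removes one point $p$, linearly orders the remaining $n-1$ points (the base case $\abs{V}=5$ is handled by a parity observation --- $\binom{5}{3}$ is not a multiple of $3$, so some pair $a,b$ lies in one or two triples of the form $(a,x,b)$, which produces two nested triples on which (M3) bites), and then reinserts $p$ either strictly between two consecutive points (its Case 1) or at one of the two ends (its Case 2). The paper's claims (ii) and (iii) in Case 2 carry out precisely the finite (M3) case analysis on undetermined middles that your sketch defers, so the hard kernel of the two arguments is the same; in your framework that kernel has simply not been executed, and until the cycle exclusion is actually proved the proposal is an outline rather than a proof.
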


\begin{proof} We will use induction on $\abs{V}$. 
To begin, we claim that 
\begin{itemize}
\item[(i)] for some element $p$ of $V$, the elements of
  $V\setminus\{p\}$ can be enumerated as $v_1$, $v_2$, \ldots ,
  $v_{n-1}$, in such a way that $(v_i,v_j,v_k)\in \cb$ if and only if
  $j$ is between $i$ and $k$.
\end{itemize}
To justify this claim, we consider the case of $\abs{V}=5$ separately
from the rest. Here, note that $\tbinom{5}{3}$ is not a multiple of
$3$, and so some $a$ and $b$ appear in one or two triples of the form
$(a,x,b)$ in $\cb$. This means that there are $a,b,c,d$ such that
$(a,c,b)\in\cb$ and $(a,d,b)\not\in\cb$. Since $\{a,d,b\}\in\ce(\cb)$,
we must have $(a,b,d)\in\cb$ or $(b,a,d)\in\cb$. Setting $v_1=a$,
$v_2=c$, $v_3=b$, $v_4=d$ if $(a,b,d)\in\cb$ and $v_1=b$, $v_2=c$,
$v_3=a$, $v_4=d$ if $(b,a,d)\in\cb$, we get $(v_1,v_2,v_3),
(v_1,v_3,v_4)\in\cb$; now (M3) with $u=v_1$, $v=v_2$, $w=v_3$, $x=v_4$
guarantees that $(v_1,v_2,v_4), (v_2,v_3,v_4)\in\cb$. In the case of
$\abs{V}\ge 6$, claim (i) is just the induction hypothesis.

\medskip

With (i) justified, we distinguish between two cases.

\medskip

{\sc Case 1:} {\em $(v_i,p,v_{i+1})\in \cb$ for some $i$.}\\
In this case, we claim that the proof can be completed by setting
$f(p)=i+0.5$ and $f(v_j)=j$ for all $j$. To justify this claim, we
first use induction on $j$, with the basis at $j=i+1$ and (M3) applied
to $(v_i,p,v_j)$, $(v_i,v_j,v_{j+1})$ in the induction step, to show
that $(v_i,p,v_j)\in \cb$ for all $j=i+1,i+2,\ldots , n-1$. In turn,
(M3) applied to $(v_i,p,v_j)$, $(v_i,v_j,v_k)$ shows that
$(p,v_j,v_k)\in \cb$ whenever $i+1\le j<k\le n-1$. Appealing to the
flip symmetry of the sequence $v_1$, $v_2$, \ldots, $v_{n-1}$, we also
note that $(v_r,v_s,p)\in \cb$ whenever $1\le r<s\le i$. Finally,
given any $r$ and $j$ such that $1\le r<i$ and $i+1\le j\le n-1$, we
apply (M3) to $(v_i,p,v_j)$, $(v_r,v_i,v_j)$ in order to check that
$(v_r,p,v_j)\in \cb$. This completes our analysis of Case 1.

\medskip

{\sc Case 2:} {\em For each $i=1,\ldots , n-2$, we have $(p,v_i,v_{i+1})\in \cb$ or $(v_i,v_{i+1},p)\in \cb$.}\\
In this case, we claim that the proof can be completed by setting
$f(v_j)=j$ for all $j$ and either $f(p)=0$ or $f(p)=n$. To justify
this claim, we will first prove that
\begin{itemize}
\item[(ii)] there is no $i$ such that $2\le i\le n-2$ and $(v_{i-1},v_i,p), (p,v_i,v_{i+1})\in \cb$,
\item[(iii)] there is no $i$ such that $2\le i\le n-2$ and $(p,v_{i-1},v_i), (v_i,v_{i+1},p)\in \cb$:
\end{itemize}

To justify (ii), assume the contrary.  Since
$\{v_{i-1},p,v_{i+1}\}$ belongs to $\ce(\cb)$, we may label its elements as $x_1,x_2,x_3$ in such a way that 
$(x_1,x_2,x_3)\in\cb$. Since 
\[(v_{i-1},v_i,p),(p,v_i,v_{i+1}), (v_{i+1},v_i,v_{i-1})\in\cb,\] we
have $(x_1,v_i,x_2),(x_2,v_i,x_3),(x_3,v_i,x_1)\in\cb$; now (M3) with
$u=x_1$, $v=v_i$, $w=x_2$, $x=x_3$ implies $(v_i,x_2,x_3)\in\cb$,
which, together with $(x_2,v_i,x_3)\in\cb$, contradicts (M2).

\medskip

To justify (iii), assume the contrary. Since $n>4$, we have $i>2$ or
$i<n-2$ or both; symmetry lets us assume that $i<n-2$. Now (ii) with
$i+1$ in place of $i$ guarantees that $(p,v_{i+1},v_{i+2})\not\in\cb$;
the assumption of this case guarantees that
$(v_{i+1},p,v_{i+2})\not\in\cb$; it follows that
$(v_{i+1},v_{i+2},p)\in\cb$.  There are three ways of including
$\{p,v_{i-1},v_{i+1}\}$ in $\ce(\cb)$; we will show that each of them
leads to a contradiction.  If $(p,v_{i-1},v_{i+1})\in\cb$, then (M3)
with $(p,v_{i+1},v_i)$ implies $(v_{i-1},v_{i+1},v_i)\in\cb$,
contradicting $(v_{i-1},v_i,v_{i+1})\in\cb$.  If
$(v_{i+1},p,v_{i-1})\in\cb$, then (M3) with $(v_{i+1},v_{i+2},p)$
implies $(v_{i+1},v_{i+2},v_{i-1})\in\cb$, contradicting
$(v_{i-1},v_{i+1},v_{i+2})\in\cb$.  If $(p,v_{i+1},v_{i-1})\in\cb$,
then (M3) with $(p,v_{i-1},v_i)$ implies
$(v_{i+1},v_{i-1},v_i)\in\cb$, contradicting
$(v_{i-1},v_i,v_{i+1})\in\cb$.

\medskip

Claims (ii) and (iii), combined with the assumption of this case, imply
that we have either $(p,v_i,v_{i+1})\in \cb$ for all $i=1,\ldots , n-2$
or else $(v_{i-1},v_i,p)\in \cb$ for all $i=2,\ldots , n-1$.  In the
first case, induction on $d$ with the basis at $d=1$ and (M3) applied
to $(p,v_i,v_{i+d})$, $(p,v_{i+d},v_{i+d+1})$ in the induction step
shows that $(p,v_i,v_{i+d})\in\cb$ for all $d=1,\ldots , n-1-i$; it
follows that we may set $f(p)=0$ and $f(v_j)=j$ for all $j$.  In the
second case, induction on $d$ with the basis at $d=1$ and (M3) applied
to $(v_{i-d},v_i,p)$, $(v_{i-(d+1)},v_{i-d},p)$ in the induction step
shows that $(v_{i-d},v_i,p)\in\cb$ for all $d=1,\ldots , i-1$; it
follows that we may set $f(v_j)=j$ for all $j$ and $f(p)=n$.
This completes our analysis of Case 2.
\end{proof}

A weaker version of Lemma~\ref{lem.trique} was proved by Richmond and
Richmond \cite{RR} and later also by Dovgoshei and Dordovskii
\cite{DoDo}: there, the assumption that $\cb$ is pseudometric is
replaced by the stronger assumption that $\cb$ is metric. As noted
in~\cite{DoDo}, this weaker version of Lemma~\ref{lem.trique} implies
a special case ($d=1$ and finite spaces) of the following result of
Menger (\cite{Men}, Satz 1): {\em If every $(d+3)$-point subspace of a
  metric space admits an isometric embedding into ${\bf R}^d$, then
  the whole space admits an isometric embedding into ${\bf R}^d$.\/}

\medskip

The conclusion of Lemma~\ref{lem.trique} may fail when
$\abs{V}=4$: here, if $\cb$ includes two triples of the form $(u,v,w),\,(u,w,x)$, then
(M3) implies that it is isomorphic to 
\[
\{(a,b,c),(c,b,a),\; (a,b,d),(d,b,a),\; (a,c,d),(d,c,a),\; (b,c,d),(d,c,b)\}
\]
as in the lemma's conclusion, but $\cb$ may include no such triples,
in which case it is isomorphic to
\[
\{(a,b,c),(c,b,a),\; (b,c,d),(d,c,b),\; (c,d,a),(a,d,c),\; (d,a,b),(b,a,d)\}.
\]
This has been also pointed out (again, with ``pseudometric'' replaced by
``metric'') by Richmond and Richmond \cite{RR} and by Dovgoshei and
Dordovskii \cite{DoDo}.

\begin{theorem}\label{thm.f1}
$\cf_1$ is a minimal non-metric hypergraph. 
\end{theorem}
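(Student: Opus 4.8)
The statement has two halves: that $\cf_1$ is not metric, and that every proper induced sub-hypergraph of $\cf_1$ is metric. For the first half I would prove the stronger claim that $\cf_1$ is not even \emph{pseudometric}, which suffices since every metric betweenness is pseudometric. So suppose for contradiction that $\cb$ is a pseudometric betweenness on the vertex set of $\cf_1$ with $\ce(\cb)=\ce(\cf_1)$. Write $G=\{a,b,c\}\times\{d,e\}$ for the six grid vertices, with rows $R_x=\{(x,d),(x,e)\}$ and columns $C_d,C_e$. Every triple inside $G$ is a hyperedge, so $\ce(\cb)$ restricted to $G$ is $\binom{G}{3}$ and $\abs{G}=6\ge 5$; hence Lemma~\ref{lem.trique} applies and produces an injection $f\colon G\ra{\bf R}$ whose order-betweenness is exactly $\cb$ on $G$. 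What remains is to fit the two extra vertices $1$ and $2$ into this line, and the heart of the argument is that no choice of their betweenness relations is consistent with (M0)--(M3).

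The driving tool is a single use of (M3). Consider a row $R_u=\{u,u'\}$ and the hyperedge $\{1,u,u'\}$. If $1$ is the middle, i.e.\ $(u,1,u')\in\cb$, and the line places a grid vertex $x$ beyond $u'$, so that $(u,u',x)\in\cb$, then (M3) with $v=1,\ w=u'$ yields $(u,1,x)\in\cb$; but $x$ lies in another row, so $\{1,u,x\}$ is a non-edge and $(u,1,x)$ is forbidden. The symmetric deduction rules out a grid vertex beyond $u$, so \emph{if $1$ is the middle of $R_u$ then $u,u'$ must be the two extreme points of $f$}. If instead $1$ is an endpoint, say $(1,u,u')\in\cb$, and some grid vertex $v$ lies strictly between $u$ and $u'$, then (M3) applied to $(u',v,u)$ and $(u',u,1)$ forces $(u',v,1)\in\cb$, making $\{1,v,u'\}$ an edge although $v\notin R_{u'}$ --- a contradiction; hence \emph{an endpoint row must be an adjacent pair}. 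Since only one pair can occupy both extremes, at least two of the three rows are adjacent pairs of $f$. An analogous (but heavier) analysis of vertex $2$ against the three-point columns $C_d,C_e$, using the cross-column non-edges, pins down how the columns may sit in $f$. The contradiction then comes from reconciling these constraints: two adjacent rows each mix the two columns, which forces an interleaving of $C_d,C_e$ that vertex $2$'s constraints forbid, with the all-forbidden triples $\{1,2,g\}$ linking the two vertices. I expect \textbf{the main obstacle} to be exactly this final case check over the few admissible orders $f$, together with the clean formulation of the vertex-$2$ engine; note that the argument genuinely needs all eight vertices at once, which is consistent with (and forced by) the minimality we prove next.

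For minimality I would first reduce the work by symmetry. The vertex degrees are $3$ (vertex $1$), $6$ (vertex $2$), and $13$ (each grid vertex), and $S_3\times S_2$ acts transitively on $G$, so there are exactly three vertex-orbits and hence three $7$-vertex induced sub-hypergraphs up to isomorphism: delete $1$, delete $2$, or delete a grid vertex. It then suffices to realize each as the betweenness hypergraph of a finite metric space, since induced sub-hypergraphs of metric hypergraphs are metric. Here a purely Euclidean attempt is obstructed: $G$ still carries all its triples, so in any realization $G$ embeds isometrically in ${\bf R}$ (the metric form of Lemma~\ref{lem.trique}), and a Euclidean point collinear with a same-class pair would land on that line and become collinear with everything, killing the required non-edges. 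I would therefore build non-Euclidean realizations guided by the order constraints found above.

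Concretely, for delete-$2$ I would put one row at the two extremes of the line with $1$ as its metric midpoint, and the other two rows as adjacent pairs with $1$ as an endpoint, realizing this by an explicit weighted-tree (shortest-path) metric; for delete-$1$ I would place vertex $2$ in the ``cyclic'' four-point position relative to each column (as in the $\abs{V}=4$ example displayed after Lemma~\ref{lem.trique}), realizable for instance on a cycle-type metric, and check the two columns are jointly consistent; the delete-grid-vertex case is the mildest, as removing a point shrinks one row and one column. The remaining task in each case is to write down the distances and verify that the induced betweenness matches the hyperedges and non-edges exactly. This verification is the technical content of the minimality direction, but it is conceptually routine once the correct extremes/adjacency placement (dictated by the (M3) engine above) has been identified.
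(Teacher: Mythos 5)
Your overall architecture coincides with the paper's: prove the stronger statement that $\cf_1$ is not pseudometric by applying Lemma~\ref{lem.trique} to the six grid vertices, then prove minimality by reducing, via symmetry, to the three deletions (vertex $1$, vertex $2$, a grid vertex) and exhibiting finite metric spaces. But in both halves you stop short of the decisive step, and these are genuine gaps rather than routine omissions. In the non-pseudometric half, your (M3) deductions about vertex $1$ are correct (if $1$ is the middle of a row, that row must occupy the two extremes of $f$; if $1$ is an endpoint, that row must be $f$-adjacent), but the heart of the proof --- the ``heavier'' analysis of vertex $2$ against the columns and the final reconciliation over admissible orders --- is only asserted, and you flag it yourself as the main obstacle. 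Note that the row argument does not transfer verbatim to vertex $2$: if $(p,2,q)\in\cb$ with $p,q$ in the same column and $x$ beyond $q$, then (M3) gives $(p,2,x)\in\cb$, which contradicts a non-edge only when $x$ lies in the \emph{other} column, so the three-point columns generate a substantially larger case structure than the two-point rows. The paper sidesteps this enumeration entirely: it first proves a purely combinatorial claim about \emph{every} injection $f\colon\{a,b,c\}\times\{d,e\}\ra{\bf R}$ (normalize the range to $\{0,\dots,5\}$ and run a short parity argument) producing grid vertices $r,s,t,u$ with $f(s)$ between $f(r)$ and $f(t)$, $f(u)$ not between them, and either $r_1=t_1$ or else $r_2=t_2$, $s_2=u_2$; it then concludes with a single uniform three-case (M3) argument in which the auxiliary vertex $x\in\{1,2\}$ is chosen according to which alternative holds. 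Until you either carry out your case check or replace it with a claim of this kind, this half is a plan, not a proof.

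The minimality half has the same status: its entire mathematical content is the explicit construction and triple-by-triple verification of three metrics, and you have written none down. The paper gives three concrete distance tables (the grid isometrically on a line at integer positions, the remaining vertex off the line at specified distances) and the check that degenerate triples match the hyperedges exactly. Your qualitative placements are unverified and do not match any construction known to work: in the paper's metric for $\cf_1\setminus 2$, all three rows are adjacent pairs at positions $(0,2)$, $(4,6)$, $(8,10)$ and vertex $1$ (at distances $6,4;\,5,3;\,4,6$) is never a row midpoint --- for each row one row-element is between the other and $1$; and in the paper's metric for $\cf_1\setminus 1$, vertex $2$ sits near the middle of the line (distances $3,2,1,1,2,3$), not in the ``cyclic'' four-point configuration you propose. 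Since the tight constraints --- the same-column non-edges involving $1$ and the cross-column non-edges involving $2$ --- are precisely where a candidate metric can fail, deferring the verification as ``conceptually routine'' leaves the second half of the theorem unproved as well.
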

\begin{proof}
  We will prove that $\cf_1$ is not pseudometric and that all its proper
  induced sub-hypergraphs are metric.

\medskip

Recall that the vertex set of $\cf_1$ is $\{1,2\}\cup
(\{a,b,c\}\times\{d,e\})$; its hyperedges are the $\binom{6}{3}$
three-point subsets of $\{a,b,c\}\times\{d,e\}$ and the nine
three-point sets $\{i,(x_1,x_2),(y_1,y_2)\}$ with $x_i=y_i$.  For each
$r$ in $\{a,b,c\}\times \{d,e\}$ and for each $i=1,2$, let $r_i$
denote the $i$-th component of $r$. We claim that
\begin{itemize}
\item[($\star$)] 
{\em For every injection $f:\{a,b,c\}\times \{d,e\}\ra {\bf R}$,\\
there exist $r,s,t,u$ in $\{a,b,c\}\times \{d,e\}$ such that
\[
\begin{array}{l}
\mbox{$r,s,t,u$ are four  vertices,}\\ 
\mbox{$f(s)$ is between $f(r)$ and $f(t)$,}\\ 
\mbox{$f(u)$ is not between $f(r)$ and $f(t)$, and}\\ 
\mbox{either $r_1=t_1$ or else $r_2=t_2$, $s_2=u_2$.}  
\end{array}
\hspace{3cm}
\]
}
\end{itemize} 
To verify this claim, we may assume without loss of generality that
the range of $f$ is $\{0,1,2,3,4,5\}$. If there are distinct $r,t$
with $r_1=t_1$ and $2\le \abs{f(r)-f(t)}\le 4$, then
($\star$) can be satisfied by these $r,t$ and a suitable
choice of $s,u$. If there are distinct $r,t$ with $r_2=t_2$ and
$\abs{f(r)-f(t)}=3$, then ($\star$) can be satisfied by these
$r,t$ and a suitable choice of $s,u$. If neither of these two
conditions is met, then
\begin{eqnarray*}
r\ne t,\, r_1=t_1 &\Ra  & \abs{f(r)-f(t)}\in \{1,5\},\\
r\ne t,\, r_2=t_2 &\Ra  & \abs{f(r)-f(t)}\in \{1,2,4,5\},
\end{eqnarray*}
in which case $x_2=y_2 \;\Ra  \; f(x)\equiv f(y) \pmod {2}$. But then
($\star$) can be satisfied by any choice of distinct $r,t$
with $r_2=t_2$ and a suitable choice of $s,u$.\\ 

To prove that $\cf_1$ is not pseudometric, assume the contrary: there
is a pseudometric betweenness $\cb$ such that $\ce(\cb)$ is the
hyperedge set of $\cf_1$. Now all $3$-point subsets of
$\{a,b,c\}\times \{d,e\}$ belong to $\ce({\cal B})$, and so
Lemma~\ref{lem.trique} guarantees the existence of an injection
$f:\{a,b,c\}\times \{d,e\}\ra {\bf R}$ such that $(x,y,z)\in \cb$ if
and only if $f(y)$ is between $f(x)$ and $f(z)$. Next, ($\star$)
implies that there are distinct $r,s,t,u$ in $\{a,b,c\}\times \{d,e\}$
such that \[(r,s,t)\in \cb,\] $(r,u,t)\not\in \cb$, and either
$r_1=t_1$ or else $r_2=t_2$, $s_2=u_2$.  Since $(r,u,t)\not\in \cb$
and $\{r,u,t\}\in \ce(\cb)$, we have $(r,t,u)\in \cb$ or $(t,r,u)\in
\cb$; after switching $r$ and $t$ if necessary, we may assume that
\[
(r,t,u)\in \cb.
\]
Writing $x=1$ if $r_1=t_1$ and $x=2$ if $r_2=t_2$, note that
\[
\{x,r,t\}\in \ce(\cb),\;\{x,r,s\}\not\in \ce(\cb),\;\{x,r,u\}\not\in \ce(\cb).
\]

Since $\{x,r,t\}\in \ce(\cb)$, we may distinguish between three cases.\\
 
In case $(x,r,t)\in \cb$, property (M3) and $(r,s,t)\in \cb$ imply
$(x,r,s)\in \cb$, contradicting $\{x,r,s\}\not\in \ce(\cb)$.

\medskip

In case $(r,t,x)\in \cb$, property (M3) and $(r,s,t)\in \cb$ imply
$(r,s,x)\in \cb$, contradicting $\{x,r,s\}\not\in \ce(\cb)$.

\medskip

In case $(r,x,t)\in \cb$, property (M3) and $(r,t,u)\in \cb$ imply
$(r,x,u)\in \cb$, contradicting $\{x,r,u\}\not\in \ce(\cb)$.\\

Symmetry of $\cf_1$ reduces checking that all its proper induced
sub-hypergraphs are metric to checking just three cases: vertex $1$
removed, vertex $2$ removed, and a vertex in $\{a,b,c\}\times\{d,e\}$
removed. Here are the distance functions of the corresponding three
metric spaces:
\[
\phantom{ c,d)}\cf_1\setminus 1:\;
\begin{array}{|c|c|c|c|c|c|c|c|}\hline
      & \!(a,d)\! & \!(b,d)\! & \!(c,d)\! & \!(a,e)\! & \!(b,e)\! & \!(c,e)\! & \;\;2\;\;\\\hline
\;(a,d)\; & \;0\; & \;1\; & \;2\; & \;3\; & \;4\; & \;5\; & \;3\;\\\hline
\;(b,d)\; & \;1\; & \;0\; & \;1\; & \;2\; & \;3\; & \;4\; & \;2\;\\\hline
\;(c,d)\; & \;2\; & \;1\; & \;0\; & \;1\; & \;2\; & \;3\; & \;1\;\\\hline
\;(a,e)\; & \;3\; & \;2\; & \;1\; & \;0\; & \;1\; & \;2\; & \;1\;\\\hline
\;(b,e)\; & \;4\; & \;3\; & \;2\; & \;1\; & \;0\; & \;1\; & \;2\;\\\hline
\!(c,e)\! & \;5\; & \;4\; & \;3\; & \;2\; & \;1\; & \;0\; & \;3\;\\\hline
\;2\; & \;3\; & \;2\; & \;1\; & \;1\; & \;2\; & \;3\; & \;0\;\\ \hline
\end{array}
\]
\smallskip
\[
\phantom{ c,d)}\cf_1\setminus 2:\;
\begin{array}{|c|c|c|c|c|c|c|c|}\hline
      & \!(a,d)\! & \!(a,e)\! & \!(b,d)\! & \!(b,e)\! & \!(c,d)\! & \!(c,e)\! & \;\;1\;\;\\\hline
\;(a,d)\; & \;0\; & \;2\; & \;4\; & \;6\; & \;8\; & 10    & \;6\;\\\hline
\;(a,e)\; & \;2\; & \;0\; & \;2\; & \;4\; & \;6\; & \;8\; & \;4\;\\\hline
\;(b,d)\; & \;4\; & \;2\; & \;0\; & \;2\; & \;4\; & \;6\; & \;5\;\\\hline
\;(b,e)\; & \;6\; & \;4\; & \;2\; & \;0\; & \;2\; & \;4\; & \;3\;\\\hline
\;(c,d)\; & \;8\; & \;6\; & \;4\; & \;2\; & \;0\; & \;2\; & \;4\;\\\hline
\!(c,e)\! & 10    & \;8\; & \;6\; & \;4\; & \;2\; & \;0\; & \;6\;\\\hline
\;1\; & \;6\; & \;4\; & \;5\; & \;3\; & \;4\; & \;6\; & \;0\;\\ \hline
\end{array}
\]
\smallskip
\[
\,\cf_1\setminus (c,d):\;
\begin{array}{|c|c|c|c|c|c|c|c|}\hline
      & \!(a,d)\! & \!(b,d)\! & \!(b,e)\! & \!(c,e)\! & \!(a,e)\! & \;\;1\;\; & \;\;2\;\;\\\hline
\;(a,d)\; & \;0\; & \;2\; & \;4\; & \;6\; & \;8\; & \;5\; & \;4\;\\\hline
\;(b,d)\; & \;2\; & \;0\; & \;2\; & \;4\; & \;6\; & \;5\; & \;2\;\\\hline
\;(b,e)\; & \;4\; & \;2\; & \;0\; & \;2\; & \;4\; & \;3\; & \;2\;\\\hline
\;(c,e)\; & \;6\; & \;4\; & \;2\; & \;0\; & \;2\; & \;3\; & \;4\;\\\hline
\;(a,e)\; & \;8\; & \;6\; & \;4\; & \;2\; & \;0\; & \;3\; & \;6\;\\\hline
\;1\; & \;5\; & \;5\; & \;3\; & \;3\; & \;3\; & \;0\; & \;4\;\\\hline
\;2\; & \;4\; & \;2\; & \;2\; & \;4\; & \;6\; & \;4\; & \;0\;\\ \hline
\end{array}
\]
\end{proof}

Next, we will consider the hypergraphs $\cf_2$ and $\cf_3$ defined by
$\cf_2=(V,\ce_2)$ and $\cf_3=(V,\ce_3)$, where
\begin{align*}
& V = \{a_1,b_1,a_2,b_2,a_3,b_3\},\\
& \ce_2 = \tbinom{V}{3}\setminus \{\{a_1,b_2,b_3\},\{a_2,b_1,b_3\},\{a_3,b_1,b_2\}, \{a_1,a_2,a_3\}\},\\
& \ce_3 = \tbinom{V}{3}\setminus \{\{a_1,b_2,b_3\},\{a_2,b_1,b_3\},\{a_3,b_1,b_2\}\}.
\end{align*}

\begin{theorem}\label{23}
$\cf_2$ and $\cf_3$ are minimal non-metric hypergraphs. 
\end{theorem}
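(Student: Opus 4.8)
The plan is to prove Theorem~\ref{23} by following the same two-part template used for $\cf_1$ in Theorem~\ref{thm.f1}: first show that neither $\cf_2$ nor $\cf_3$ is pseudometric (hence not metric), and then verify that every proper induced sub-hypergraph of each is metric. For the minimality half, I would exploit symmetry aggressively. Both $V$ and the forbidden-triple families $\{\{a_1,b_2,b_3\},\{a_2,b_1,b_3\},\{a_3,b_1,b_2\}\}$ are invariant under the natural $S_3$-action permuting the indices $\{1,2,3\}$ (carrying $a_i\mapsto a_{\sigma(i)}$, $b_i\mapsto b_{\sigma(i)}$), so deleting a single vertex reduces to checking only two orbit representatives — one $a_i$ and one $b_i$ — for each of $\cf_2$ and $\cf_3$. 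For each of these few $5$-vertex induced sub-hypergraphs I would simply exhibit an explicit distance matrix realizing the induced betweenness, exactly in the tabular style used for $\cf_1\setminus 1$, $\cf_1\setminus 2$, $\cf_1\setminus(c,d)$ above; the work is bounded search for small integer-valued metrics on five points, which is finite and routine.

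For the non-pseudometric half, which is the heart of the statement, I would argue by contradiction: suppose $\cb$ is a pseudometric betweenness with $\ce(\cb)=\ce_2$ (respectively $\ce_3$) and derive a clash with (M0)--(M3). The strategy is to locate, among the many triples that \emph{are} hyperedges, a configuration forcing, via repeated application of the transitivity-type axiom (M3), a betweenness relation on one of the \emph{forbidden} triples, contradicting the fact that that triple lies outside $\ce(\cb)$. Concretely, for each hyperedge $\{x,y,z\}$ the axiom (M0) plus $\{x,y,z\}\in\ce(\cb)$ forces exactly one of the three cyclic betweenness orientations to hold, so one gets a large system of ``exactly one of three'' constraints together with the (M3) implications; I would chase these constraints to show the system is infeasible. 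The natural engine is the observation, already used in the $\cf_1$ proof, that (M3) lets a betweenness fact propagate: from $(u,v,w)$ and $(u,w,x)$ in $\cb$ one deduces $(u,v,x)$ and $(v,w,x)$, and (M2) then blocks the reverse orientation.

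The main obstacle will be organizing this case analysis so that it terminates cleanly rather than branching combinatorially. I expect the cleanest route is to first pin down the betweenness orientations on a well-chosen ``spine'' of hyperedges — for instance a set of triples all sharing a common vertex, so that (M3) applies repeatedly with that vertex as the fixed endpoint $u$ — and then show that whichever orientation is chosen for one key hyperedge, (M3) forces a betweenness relation inside one of the excluded triples such as $\{a_1,b_2,b_3\}$, which is impossible because that triple is not in $\ce(\cb)$. The distinction between $\cf_2$ and $\cf_3$ lies only in whether $\{a_1,a_2,a_3\}$ is a hyperedge, so I would treat $\cf_3$ first (where $\{a_1,a_2,a_3\}\in\ce_3$ gives one extra constraint to feed into (M3)) and then handle $\cf_2$ by the same scheme, noting where the absence of that hyperedge requires a slightly different closing move. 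Throughout, the symmetry under the index-permutation group should let me reduce the number of genuinely distinct cases to a handful and keep the argument short, mirroring the economy of the $\cf_1$ proof.
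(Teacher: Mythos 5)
Your plan follows the same two-part template as the paper's proof: non-pseudometricity via a chase of (M3) consequences to a contradiction, and minimality via symmetry reduction plus explicit five-point metrics, so in outline it is sound. The one substantive strategic difference is that you propose separate arguments for $\cf_3$ and $\cf_2$, and in particular intend to feed $\{a_1,a_2,a_3\}\in\ce_3$ into (M3) for $\cf_3$ and then patch the closing move for $\cf_2$. The paper kills both at once: it assumes only $\ce_2\subseteq\ce(\cb)\subseteq\ce_3$ and derives the contradiction without ever consulting $\{a_1,a_2,a_3\}$. Concretely, from a WLOG choice $(b_1,b_2,b_3)\in\cb$, the three shared non-hyperedges force in turn $(b_1,a_1,b_3)\in\cb$, $(a_1,b_1,b_2)\in\cb$, and $(b_2,a_3,a_1)\in\cb$; since $(b_1,b_2,b_3)\in\cb$ is invariant under switching subscripts $1$ and $3$ (by (M1)), the same derivation gives $(b_2,a_1,a_3)\in\cb$, contradicting (M2). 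This is a warning about your proposed spine: any chain that leans on $\{a_1,a_2,a_3\}$ will not transfer to $\cf_2$, whereas a chain using only the common hyperedges and the three common non-hyperedges works verbatim for both, which is exactly the economy the paper exploits.

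Two smaller remarks. For minimality, your $S_3$-orbit count leaves four five-vertex hypergraphs to realize metrically ($\cf_2\setminus a_1$, $\cf_2\setminus b_1$, $\cf_3\setminus a_1$, $\cf_3\setminus b_1$); the paper notes further that every five-point induced sub-hypergraph of $\cf_2$ is isomorphic to $\cf_3\setminus a_1$ (each is the complete $3$-uniform hypergraph on five vertices minus two triples sharing exactly one vertex and covering all five points), so only two distance matrices are needed. And as written, your proposal defers precisely the content of the theorem: no betweenness chain and no distance matrices are actually exhibited. Both searches are indeed finite and routine, but a complete proof must record their outcomes, as the paper does; until then this is a viable plan rather than a proof.
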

\begin{proof}
  We will prove that neither of $\cf_2$ and $\cf_3$ is pseudometric
  and that all their proper induced sub-hypergraphs are metric.

\medskip

To prove that neither of $\cf_2$ and $\cf_3$ is pseudometric, assume
the contrary: some pseudometric betweenness $\cb$ on
$\{a_1,b_1,a_2,b_2,a_3,b_3\}$ has
\begin{multline*}
\tbinom{V}{3}\setminus \{\{a_1,b_2,b_3\},\{a_2,b_1,b_3\},\{a_3,b_1,b_2\},\{a_1,a_2,a_3\}\}\subseteq \ce(\cb)\\ 
\subseteq \tbinom{V}{3}\setminus \{\{a_1,b_2,b_3\},\{a_2,b_1,b_3\},\{a_3,b_1,b_2\}\}.
\end{multline*}
Since $\{b_1,b_2,b_3\}\in \ce(\cb)$, we may assume (after permuting the subscripts if necessary) that
\begin{itemize}
\item[(i)] $(b_1,b_2,b_3)\in \cb$.
\end{itemize}
Now, since $\{a_1,b_2,b_3\}\not\in \ce(\cb)$, property (M3) implies
that $(b_1,b_3,a_1)\not\in \cb$ and $(b_3,b_1,a_1)\not\in \cb$; since 
$\{a_1,b_1,b_3\}\in \ce(\cb)$, it follows that
\begin{itemize}
\item[] $(b_1,a_1,b_3)\in \cb$.
\end{itemize}
Next, since $\{a_1,b_2,b_3\}\not\in \ce(\cb)$, property (M3) implies
that $(b_1,a_1,b_2)\not\in \cb$ and $(b_1,b_2,a_1)\not\in \cb$; since 
$\{a_1,b_1,b_2\}\in \ce(\cb)$, it follows that
\begin{itemize}
\item[] $(a_1,b_1,b_2)\in \cb$.
\end{itemize}
Finally, since $\{a_3,b_1,b_2\}\not\in \ce(\cb)$, property (M3) implies
that $(a_1,b_2,a_3)\not\in \cb$ and $(b_2,a_1,a_3)\not\in \cb$; since 
$\{a_1,b_2,b_3\}\in \ce(\cb)$, it follows that
\begin{itemize}
\item[(ii)] $(b_2,a_3,a_1)\in \cb$.
\end{itemize}
Switching subscripts $1$ and $3$ in this derivation of (ii) from (i),
we observe that (i) also implies
\begin{itemize}
\item[(iii)] $(b_2,a_1,a_3)\in \cb$.
\end{itemize}
But (ii) and (iii) together contradict property (M2).

\medskip

Symmetry reduces checking that all proper induced sub-hypergraphs of
$\cf_2$ and $\cf_3$ are metric to checking that two hypergraphs are
metric: $\cf_3\setminus a_1$ (isomorphic to $\cf_3\setminus a_2$ ,
$\cf_3\setminus a_3$, and to all five-point induced sub-hypergraphs of
$\cf_2$) and $\cf_3\setminus b_1$ (isomorphic to $\cf_3\setminus b_2$
and $\cf_3\setminus b_3$). Here are distance functions certifying that
these two hypergraphs are metric:
\[
\cf_3\setminus a_1:\;\;
\begin{array}{|c|c|c|c|c|c|c|c|}\hline
        &\!a_2\!&\!a_3\!&\!b_3\!&\!b_2\!&\!b_1\!\\\hline
\;a_2\; & \;0\; & \;1\; & \;2\; & \;1\; & \;2\; \\\hline
\;a_3\; & \;1\; & \;0\; & \;1\; & \;2\; & \;3\; \\\hline
\;b_3\; & \;2\; & \;1\; & \;0\; & \;1\; & \;2\; \\\hline
\;b_2\; & \;1\; & \;2\; & \;1\; & \;0\; & \;3\; \\\hline
\;b_1\; & \;2\; & \;3\; & \;2\; & \;3\; & \;0\; \\\hline
\end{array}
\]
\smallskip
\[
\cf_3\setminus b_1:\;\;
\begin{array}{|c|c|c|c|c|c|c|c|}\hline
        &\!a_2\!&\!a_1\!&\!a_3\!&\!b_3\!&\!b_2\!\\\hline
\;a_2\; & \;0\; & \;1\; & \;2\; & \;1\; & \;1\; \\\hline
\;a_1\; & \;1\; & \;0\; & \;1\; & \;2\; & \;2\; \\\hline
\;a_3\; & \;2\; & \;1\; & \;0\; & \;1\; & \;1\; \\\hline
\;b_3\; & \;1\; & \;2\; & \;1\; & \;0\; & \;2\; \\\hline
\;b_2\; & \;1\; & \;2\; & \;1\; & \;2\; & \;0\; \\\hline
\end{array}
\]
\end{proof}

\begin{corollary}
No complement of a Steiner triple system with more than three vertices is metric.
\end{corollary}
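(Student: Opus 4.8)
The plan is to show that every complement of a Steiner triple system on more than three vertices contains, as an induced sub-hypergraph, a copy of $\cf_2$ or of $\cf_3$. Since every induced sub-hypergraph of a metric hypergraph is metric and since $\cf_2,\cf_3$ are not metric by Theorem~\ref{23}, this containment immediately yields the corollary. So let $(V,\ce)$ be a complement of a Steiner triple system, write $\mathcal{T}=\binom{V}{3}\setminus\ce$ for the associated Steiner triple system (so every pair of vertices lies in exactly one member of $\mathcal{T}$), and assume $\abs{V}>3$.

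First I would locate a \emph{non-collinear} triple, that is, three vertices $b_1,b_2,b_3$ with $\{b_1,b_2,b_3\}\notin\mathcal{T}$. Such a triple exists as soon as $\abs{V}\ge 4$: the pair $\{b_1,b_2\}$ lies in a unique triple $\{b_1,b_2,c\}$ of $\mathcal{T}$, and taking $b_3$ to be any vertex other than $b_1,b_2,c$ gives $\{b_1,b_2,b_3\}\notin\mathcal{T}$. Then, for each pair among $b_1,b_2,b_3$, let $a_k$ be the third vertex of the unique triple of $\mathcal{T}$ through that pair, indexed so that
\[
\{a_1,b_2,b_3\},\;\{a_2,b_1,b_3\},\;\{a_3,b_1,b_2\}\;\in\;\mathcal{T}.
\]
A short argument shows that $b_1,b_2,b_3,a_1,a_2,a_3$ are pairwise distinct: if, say, $a_1=a_2$, then the first two displayed triples would share this common vertex as well as $b_3$, hence coincide, forcing $\{b_1,b_2,b_3\}\in\mathcal{T}$ and contradicting non-collinearity; the possibility $a_i=b_j$ is ruled out the same way.

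Set $W=\{b_1,b_2,b_3,a_1,a_2,a_3\}$. The crux is to pin down exactly which triples of $\mathcal{T}$ lie inside $W$. The three displayed triples already cover nine of the fifteen pairs inside $W$; the six remaining pairs are the three $\{a_i,a_j\}$ and the three $\{a_i,b_i\}$. Because each pair lies in a \emph{unique} member of $\mathcal{T}$, any further triple $T\subseteq W$ must have all three of its pairs among these six remaining ones, and the only triple with that property is $\{a_1,a_2,a_3\}$: in the graph of remaining pairs each $b_i$ has degree one, so the unique triangle is $\{a_1,a_2,a_3\}$. Hence $\mathcal{T}\cap\binom{W}{3}$ is either exactly the three displayed triples (matching the non-hyperedges of $\cf_3$) or those three together with $\{a_1,a_2,a_3\}$ (matching the non-hyperedges of $\cf_2$). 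Comparing with the definitions of $\ce_2$ and $\ce_3$, the sub-hypergraph induced by $W$ is therefore isomorphic to $\cf_2$ or to $\cf_3$, which finishes the argument.

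The hard part — in fact the only step requiring care — is this last verification that $W$ carries no spurious triple of $\mathcal{T}$ beyond the three lines and the possible diagonal $\{a_1,a_2,a_3\}$; it is precisely here that the defining ``each pair in a single triple'' property of the Steiner triple system is used, via the pair-counting above. The remaining ingredients (existence of a non-collinear triple and distinctness of the six vertices) are routine, and the passage from containment to non-metricity is immediate from Theorem~\ref{23}.
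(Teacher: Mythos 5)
Your proposal is correct and takes essentially the same route as the paper: pick a hyperedge $\{b_1,b_2,b_3\}$, let $a_1,a_2,a_3$ be the third vertices of the Steiner triples through its three pairs, verify the six vertices are distinct, and observe that they induce $\cf_2$ or $\cf_3$, whose non-metricity (Theorem~\ref{23}) is inherited by the whole hypergraph since induced sub-hypergraphs of metric hypergraphs are metric. You simply make explicit the pair-counting details (distinctness and the fact that the only possible extra triple inside the six vertices is $\{a_1,a_2,a_3\}$) that the paper's terse proof leaves to the reader.
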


\begin{proof} We will point out that every complement $(V,\ce)$ of a
  Steiner triple system with more than three vertices contains at
  least one of $\cf_2$ and $\cf_3$. To do this, note that, since $V$
  includes more than three vertices, it includes pairwise distinct
  vertices $b_1,b_2,b_3$ such that $\{b_1,b_2,b_3\}\in \ce$. Since
  every two vertices in $V$ belong to precisely one member of
  $\binom{V}{3}\setminus \ce$, it follows first that there are
  vertices $a_1,a_2,a_3$ such that
  $\{a_1,b_2,b_3\},\{a_2,b_1,b_3\},\{a_3,b_1,b_2\}\not\in \ce$, then
  that $a_1,a_2,a_3,b_1,b_2,b_3$ are six distinct vertices, and
  finally that these six vertices induce in $(V,\ce)$ one of $\cf_2$
  and $\cf_3$.
\end{proof}

\section{Variations}\label{sec.var}

In this section, we prove two variations on Theorem~\ref{thm.no2}.

\begin{theorem}\label{thm.no1no3}
  If, in a $3$-uniform hypergraph, no sub-hypergraph induced by four
   vertices has one or three hyperedges, then the
  hypergraph has the De Bruijn - Erd\H{o}s property.
\end{theorem}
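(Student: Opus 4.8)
The plan is to linearise the whole problem over $\mathbb{F}_2$. First I would encode the hypothesis structurally. Fix an arbitrary vertex $p$ and let $G$ be the graph on $V$ in which $p$ is isolated and two vertices $u,v$ (both distinct from $p$) are adjacent precisely when $\{p,u,v\}\in\ce$. The hypothesis that no four vertices carry one or three hyperedges says exactly that every four-vertex sub-hypergraph carries an \emph{even} number of hyperedges; applying this to the four-set $\{p,u,v,w\}$ shows that $\{u,v,w\}\in\ce$ if and only if the triangle $uvw$ spans an odd number of edges of $G$. (For four-sets avoiding $p$ the even condition is automatic, since in any graph each of the six edges of a four-set lies in exactly two of its four triangles, so the number of odd triangles is even.) Thus $\ce$ is recovered as the set of odd triangles of a single graph $G$, and this completely captures the hypothesis; in the language of two-graphs, $\ce$ is the two-graph determined by $G$.

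The key step is then a clean formula for lines. Working in $\mathbb{F}_2^V$, associate to each vertex $x$ the indicator vector $M_x$ of its closed neighbourhood $N[x]$ in $G$, and write $c_{uv}=1$ if $uv\in E(G)$ and $c_{uv}=0$ otherwise. A short case check on whether $uv\in E(G)$ shows that the indicator vector of the line $\ov{uv}$ equals
\[
M_u + M_v + c_{uv}\cdot\mathbf 1,
\]
where $\mathbf 1$ is the all-ones vector. Consequently $\ov{uv}=V$ (indicator $\mathbf 1$) can happen only when $M_u=M_v$ or $M_u+M_v=\mathbf 1$. So, under the assumption $V\not\in\cl$, the vectors $M_x$ are pairwise distinct and no two of them are complementary (no $M_u+M_v=\mathbf 1$); I would record both facts, since they are exactly what forces distinct pairs to yield distinct lines.

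Finally I would count. (For $n\le 2$ the unique line is all of $V$ and there is nothing to prove, so assume $n\ge 3$ and $V\not\in\cl$.) Fix the vertex $1$. For $j\neq 1$ the line $\ov{1j}$ has indicator $M_1+M_j+c_{1j}\mathbf 1$, and the distinctness and non-complementarity of the $M_x$ force these $n-1$ vectors to be pairwise distinct; as each such line contains $1$, this already gives $n-1$ distinct lines all through $1$. To produce an $n$-th line I would argue that not every line can pass through $1$: if every line did, then every $\ov{km}$ with $k,m\neq 1$ would contain $1$, so every triple through $1$ would lie in $\ce$, whence $\ov{1k}=V$, contradicting $V\not\in\cl$. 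Any line missing $1$ is therefore distinct from the previous $n-1$, giving $\abs{\cl}\ge n$; and if instead some line equals $V$ the De Bruijn--Erd\H{o}s property holds trivially. The main obstacle is the middle step: choosing the right linear encoding so that lines become simple affine expressions in the $M_x$. Once that is done, the degenerate coincidences (two equal or two complementary neighbourhood vectors) are precisely the phenomena that produce a line equal to $V$, so the assumption $V\not\in\cl$ removes them and the count goes through.
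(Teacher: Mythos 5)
Your proof is correct, but it reaches the conclusion by a genuinely different route from the paper's. The paper argues directly from the parity hypothesis: its key claim ($\star$) is that $\ov{uv}=\ov{uw}$ with $v\ne w$ forces $\ov{vw}=V$ (for any fourth vertex $x$, the four vertices $u,v,w,x$ carry an even number of hyperedges, one of which is $\{u,v,w\}$, and $\{u,v,x\}\in\ce\Leftrightarrow\{u,w,x\}\in\ce$ then forces $\{v,w,x\}\in\ce$); it then fixes a line $L$ and a vertex $v\notin L$, and ($\star$) makes the $n-1$ lines through $v$ pairwise distinct, all distinct from $L$. You instead observe that the hypothesis makes $(V,\ce)$ a two-graph --- the set of odd triangles of a graph $G$ --- and encode lines linearly over $\mathbb{F}_2$: the indicator of $\ov{uv}$ is $M_u+M_v+c_{uv}\mathbf{1}$, where $M_x$ is the closed-neighbourhood vector. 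Your checks are right, including the implicit ones: if $M_u=M_v$ with $u\ne v$, the coordinate at $u$ automatically forces $c_{uv}=1$, and if $M_u+M_v=\mathbf{1}$ it forces $c_{uv}=0$, so either coincidence really does produce $\ov{uv}=V$ and is excluded by $V\notin\cl$. The two counting skeletons then coincide --- $n-1$ pairwise distinct lines through one vertex plus one line missing it --- although you need the small extra argument that some line misses your fixed vertex (else $\ov{1k}=V$), which the paper sidesteps by choosing $L$ before choosing the vertex. What your route buys is structure: the affine formula describes all lines at once, makes the degenerate coincidences (equal or complementary neighbourhood vectors) completely transparent, and ties the hypothesis to the theory of two-graphs and Seidel switching, which could support quantitative refinements; the paper's argument is shorter, entirely local, and needs no auxiliary encoding.
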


\begin{proof}
  Let $(V,\ce)$ be a $3$-uniform hypergraph in which no four
   vertices induce one or three hyperedges. We claim that
\begin{itemize}
\item[($\star$)] $\ov{uv}=\ov{uw}$ and $v\ne w
\;\Ra  \; \ov{vw}=V$.
\end{itemize}
To justify this claim, consider an arbitrary vertex $x$ other than
$u,v,w$: we propose to show that $x\in \ov{vw}$. Since
$\ov{uv}=\ov{uw}$, we have $\{u,v,w\}\in\ce$, and so the four vertices
$u,v,w,x$ induce one or three hyperedges in addition to $\{u,v,w\}$;
since $\{u,v,x\}\in\ce \Leftrightarrow \{u,w,x\}\in\ce$, it follows
that $\{v,w,x\}\in\ce$.

\medskip

To prove that $(V,\ce)$ has the De Bruijn - Erd\H{o}s property, we may
assume that none of its lines equals $V$.  Now take any line $L$ and any 
vertex $v$ in $V\setminus L$.  All the lines $\ov{uv}$ with $u\ne v$
are pairwise distinct by ($\star$) and $L$ is distinct from
all of them since it does not contain $v$.
\end{proof}

\begin{theorem}\label{thm.no4}
  If, in a $3$-uniform hypergraph, no sub-hypergraph induced by four
  vertices has four hyperedges, then the hypergraph has the De Bruijn -
  Erd\H{o}s property.
\end{theorem}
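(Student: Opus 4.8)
The plan is to read the hypothesis as saying that $\ce$ contains no complete $3$-uniform hypergraph on four vertices, i.e.\ that $\ce$ is $K_4^{(3)}$-free: among any four vertices at least one of the four triples is a non-hyperedge. For a vertex $p$ I would record the lines through $p$ by the \emph{link graph} $\Sigma_p$ on $V\setminus\{p\}$, in which $u,v$ are adjacent exactly when $\{p,u,v\}\in\ce$; then the lines through $p$ are precisely the sets $\{p\}\cup N[u]$ (closed neighbourhoods) as $u$ ranges over $V\setminus\{p\}$, so the number $\ell_p$ of lines through $p$ equals the number of distinct closed neighbourhoods of $\Sigma_p$. The $K_4^{(3)}$-freeness says exactly that every triangle of $\Sigma_p$ is a non-hyperedge of $\ce$, and that the restriction of $\ce$ to $V\setminus\{p\}$ is again $K_4^{(3)}$-free; these are the facts the argument will feed on.

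First I would dispose of two easy situations. If every two vertices lie on exactly one line, then the set $\cl$ of lines is a linear space in which, since $V\notin\cl$, every member has between $2$ and $n-1$ vertices, so the De Bruijn--Erd\H{o}s theorem gives $\abs{\cl}\ge n$ at once. Otherwise some pair lies on at least two lines, and here I would record a cheap sufficient condition for success: if $\ell_p=n-1$ for some $p$ -- that is, if all closed neighbourhoods of some $\Sigma_p$ are distinct -- then we are done. Indeed $V\notin\cl$ forces $\Sigma_p\ne K_{n-1}$, so some $\{a,b\}$ has $ab\notin E(\Sigma_p)$, whence $\ov{ab}$ avoids $p$ and is distinct from the $n-1$ lines through $p$; this already exhibits $n$ lines.

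This reduces the theorem to the residual case in which the hypergraph is not a linear space and yet $\ell_p\le n-2$ for every $p$, so that every link graph has a pair of \emph{closed twins} $u,u'$ (distinct vertices with $N[u]=N[u']$, equivalently $\ov{pu}=\ov{pu'}$). Here I would argue by induction on $n$, deleting one vertex $u'$ of such a twin pair. The useful local consequence of $K_4^{(3)}$-freeness is that $\ov{pu}=\ov{pu'}$ forces $\ov{uu'}\cap\ov{pu}=\{p,u,u'\}$: for any further $x\in\ov{pu}$ we have $\{p,u,x\},\{p,u',x\},\{p,u,u'\}\in\ce$, so $\{u,u',x\}\in\ce$ would complete a forbidden $K_4^{(3)}$ on $\{p,u,u',x\}$, giving $x\notin\ov{uu'}$. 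Thus a twin carries essentially redundant line information, and deleting $u'$ should lose at most one line while leaving the hypergraph $K_4^{(3)}$-free, so that the inductive bound of $n-1$ lines on $V\setminus\{u'\}$ can be lifted to $n$ lines of $(V,\ce)$.

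The main obstacle is exactly this last step: controlling the lines under vertex deletion. Two distinct lines of $(V,\ce)$ collapse to a single line of the sub-hypergraph precisely when they differ only in the deleted vertex, and a line equal to all of $V\setminus\{u'\}$ may appear in the sub-hypergraph -- the perverse base case of the induction -- so the bookkeeping that reinstates the missing line, and that rules out losing two lines, is where the real work lies. Unlike the situation in Theorem~\ref{thm.no2}, the condition here yields no $P_4$-free link and hence no Seinsche decomposition, so in place of that machinery I would rely on the twin structure above, choosing $u'$ so that each line surviving the deletion is witnessed by a pair avoiding $u'$ and carefully tracking the one line through $u'$ that must be restored.
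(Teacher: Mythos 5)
Your preliminary reductions are all sound: the link-graph reformulation (lines through $p$ are the sets $\{p\}\cup N[u]$), the unique-line case via the De Bruijn--Erd\H{o}s theorem, the observation that $\ell_p=n-1$ already yields $n$ lines, the pigeonhole producing closed twins when $\ell_p\le n-2$ for every $p$, and the local lemma $\ov{uu'}\cap\ov{pu}=\{p,u,u'\}$. But the argument stops exactly where the theorem lives, and what you defer is not bookkeeping. Two things are missing. First, you never show that the twin $u'$ \emph{gains} you a line, i.e., that some line of $(V,\ce)$ is distinct from every line $\ov{xy}$ with $x,y\ne u'$; your twin lemma does not do this --- indeed $\ov{pu'}=\ov{pu}$ already shows that lines through $u'$ do get absorbed by pairs avoiding $u'$, and nothing in your setup prevents $\ov{uu'}$ and every other line through $u'$ from being absorbed as well. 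Second, the perverse case you flag is fatal as framed: if the sub-hypergraph on $V\setminus\{u'\}$ has a line equal to its whole vertex set, then the induction hypothesis (the De Bruijn--Erd\H{o}s \emph{property}) gives no lower bound at all on its number of lines, so there is nothing to lift.

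The paper resolves both difficulties by choices different from yours, and the comparison shows why your route is stuck. It proves by induction the stronger statement that for $n\ge 4$ the hypergraph has at least $n$ distinct lines, with the basis $n=4$ checked directly; inside such an induction the perverse escape never arises. And instead of deleting a twin, it deletes a vertex $p$ lying on a line $\ov{pq}$ with at least four points --- such a $p$ exists, since otherwise (by Theorem~\ref{thm.no2no3}) every two vertices lie on exactly one line and De Bruijn--Erd\H{o}s finishes immediately. The gain of a line is then proved by contradiction: if every $\ov{pv_i}$ equals some $\ov{v_{r(i)}v_{s(i)}}$, then $K_4^{(3)}$-freeness first forces $r(i)=i$ (else $p,v_i,v_{r(i)},v_{s(i)}$ carry four hyperedges), and then following $2\mapsto s(2)=j\mapsto s(j)$ yields four hyperedges on $p,v_2,v_j,v_{s(j)}$ when $s(j)\ne 2$, while when $s(j)=2$ one gets $\ov{pv_2}=\ov{pv_j}=\ov{v_2v_j}$ and a \emph{fourth point} of this line supplies the forbidden configuration --- this is precisely where the choice ``vertex on a long line'' (rather than ``twin'') earns its keep, and your sketch has no analogue of this duplication analysis. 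To salvage your approach you would need both such an analysis for your $u'$ and a strengthened induction statement that kills the perverse case; as written, the proposal is a correct frame around an unproved core.
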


\begin{proof}
  Let $(V,\ce)$ be a $3$-uniform hypergraph in which no four vertices
  induce four hyperedges; let $\ch$ denote this hypergraph and let $n$
  stand for the number of its vertices. Assuming that $n\ge 4$, we
  propose to prove by induction on $n$ that $\ch$ has at least $n$
  distinct lines. The induction basis, $n=4$, can be verified routinely.

\medskip

In the induction step, we may assume that some $\ov{pq}$ has at least
four vertices: otherwise the De Bruijn - Erd\H{o}s theorem guarantees
right away that $\ch$ has at least $n$ distinct lines. Enumerate the vertices
of $\ch$ as $p,v_2,v_3,\ldots , v_n$ with $v_2=q$. By the induction
hypothesis, at least $n-1$ of the lines $\ov{v_rv_s}$ are distinct. We
will complete the induction step by showing that at least one of the
lines $\ov{pv_2}, \ov{pv_3}, \ldots , \ov{pv_n}$ is distinct from all of
them.

\medskip

For this purpose, assume the contrary: each $\ov{pv_i}$ equals some
$\ov{v_{r(i)}v_{s(i)}}$.  Under this assumption, we are going to find
four vertices inducing four hyperedges. To begin with, we may assume
that one of $r(i),s(i)$ must be $i$: otherwise
$p,v_i,v_{r(i)},v_{s(i)}$ are four vertices inducing four hyperedges
and we are done. It follows that we may set $r(i)=i$, and so
$\ov{pv_i}=\ov{v_iv_{s(i)}}$ for all $i$. Let us write $j$ for $s(2)$.

\smallskip

{\sc Case 1:} $s(j)=2$.\\
In this case, $\ov{pv_2}=\ov{v_2v_j}=\ov{pv_j}$; by assumption, this
line has at least four vertices; $p,v_2,v_j$, and any one of its other
vertices induce four hyperedges, a contradiction.

\smallskip

{\sc Case 2:} $s(j)\ne 2$.\\
In this case, $p,v_2,v_j,v_{s(j)}$ are four vertices.  We have
$\{p,v_2,v_j\}\in\ce$ since $\ov{pv_2}=\ov{v_2v_j}$ and we have
$\{p,v_j,v_{s(j)}\}\in\ce$ since $\ov{pv_j}=\ov{v_jv_{s(j)}}$; now
$v_2\in \ov{pv_j}$, and so $\ov{pv_j}=\ov{v_jv_{s(j)}}$ implies
$\{v_2,v_j,v_{s(j)}\}\in\ce$; next, $v_{s(j)}\in \ov{v_2v_j}$, and so
$\ov{pv_2}=\ov{v_2v_j}$ implies $\{p,v_2,v_{s(j)}\}\in\ce$. But then
$p,v_2,v_j,v_{s(j)}$ induce four hyperedges, a contradiction.
\end{proof}

For all sufficiently large $n$ (certainly for all $n$ at least $27$
and possibly for all $n$), the conclusion of Theorem~\ref{thm.no4} can
be strengthened: the hypergraph has at least as many distinct
lines as it has vertices whether or not one of its lines consists of
all its vertices. In fact, the number of distinct lines grows much faster with
the number of vertices:

\begin{theorem}\label{thm.no4++}
  If, in a $3$-uniform hypergraph with $n$ vertices, no sub-hypergraph
  induced by four vertices has four hyperedges, then the hypergraph
  has at least $(n/3)^{3/2}$ distinct lines.
\end{theorem}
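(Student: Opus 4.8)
The plan is to bound $\abs{\cl}$ from below by playing off two opposing estimates, both governed by the independence number $\alpha$ of the hypergraph (the size of the largest set spanning no hyperedge), and then to optimize over $\alpha$. Throughout, call a pair $\{u,v\}$ a \emph{generator} of the line $L$ if $\ov{uv}=L$, and let $p(L)$ be the number of generators of $L$; since each pair generates exactly one line, $\sum_L p(L)=\binom n2$.

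First I would set up the structural backbone, which is where the forbidden $K_4^{(3)}$ enters. If two \emph{disjoint} pairs $\{u,v\}$ and $\{x,y\}$ both generate $L$, then $u,x,y\in\ov{uv}=L$ and $x,u,v\in\ov{xy}=L$ force $\{u,v,x\},\{u,v,y\},\{x,y,u\},\{x,y,v\}\in\ce$, so the four vertices $u,v,x,y$ carry four hyperedges, which is excluded. Hence the generators of any fixed $L$ form an intersecting family of $2$-sets, so they are either a triangle (at most three generators) or a star sharing a common vertex $c$. In the star case, if $v,v',v''$ are three of the outer endpoints then $\{c,v,v'\},\{c,v,v''\},\{c,v',v''\}\in\ce$, so $\{v,v',v''\}\notin\ce$; thus the outer endpoints span no hyperedge. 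Since there is one generator per outer endpoint, this gives the multiplicity bound $p(L)\le\alpha$ for every $L$ (the triangle case needs only $\alpha\ge 3$), whence $\abs{\cl}\ge\binom n2/\alpha$.

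The complementary estimate is that a large independent set forces many distinct lines. If $I$ spans no hyperedge and $v,v'\in I$, then no third vertex of $I$ can lie on $\ov{vv'}$, as it would complete a hyperedge inside $I$; hence $\ov{vv'}\cap I=\{v,v'\}$, so the map $\{v,v'\}\mapsto\ov{vv'}$ is injective and $\abs{\cl}\ge\binom{\abs{I}}2$. Applying this to a maximum independent set yields $\abs{\cl}\ge\binom{\alpha}2$.

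Finally I would combine $\abs{\cl}\ge\binom\alpha2$ with $\abs{\cl}\ge\binom n2/\alpha$ and optimize over the unknown $\alpha$, carrying the constants so that the two bounds meet at the claimed value $(n/3)^{3/2}$. The hard part will be precisely this balancing: the crude combination is weakest in the intermediate range of $\alpha$, so to reach the exponent $3/2$ I expect to need a sharper version of one of the two estimates. The natural refinement is to localize the counting: a star of multiplicity $t$ centred at $c$ not only consumes $t$ generators but, through its independent set of $t$ outer endpoints, forces $\binom t2$ \emph{further} distinct lines, all passing through $c$. Summing this refined bookkeeping over the centres, and using both $\sum_L p(L)=\binom n2$ and the fact that the independent sets attached to a single centre are pairwise disjoint, should upgrade the intermediate-range estimate enough to close the gap and pin down the constant $1/3^{3/2}$. (The degenerate cases where $\alpha$ is a small constant, or where $V$ itself is a line, reduce to the earlier De Bruijn--Erd\H os-type statements and can be disposed of directly.)
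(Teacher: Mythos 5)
Your two lemmas are correct, and the first one---that the generator pairs of a fixed line pairwise intersect, because two disjoint generators $\{u,v\},\{x,y\}$ of $L$ would give $\{u,v,x\},\{u,v,y\},\{x,y,u\},\{x,y,v\}\in\ce$---is exactly the opening observation of the paper's proof. But as you yourself note, the combination $\abs{\cl}\ge\max\bigl\{\tbinom{\alpha}{2},\,\tbinom{n}{2}/\alpha\bigr\}$ bottoms out near $\alpha=n^{2/3}$ at order $n^{4/3}$, so everything rests on the sketched refinement, and that sketch has a concrete, unresolved double-counting obstruction. A fixed pair $\{v,v'\}$ can appear among the outer endpoints of stars at \emph{many} different centres: any $c$ with $\ov{cv}=\ov{cv'}$ qualifies, and two such centres $c,c'$ put only the two hyperedges $\{c,v,v'\}$ and $\{c',v,v'\}$ on the four vertices $c,c',v,v'$, which the hypothesis allows. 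Hence when you sum $\binom{p(L)}{2}$ over stars, the same line $M=\ov{vv'}$ may be charged once per compatible centre---an unbounded number of times---and neither $\sum_L p(L)=\binom{n}{2}$ nor the disjointness of the groups at a single centre controls this. What your argument proves is that the $\binom{t}{2}$ forced lines at one centre are distinct \emph{from each other}; what the bound needs is that they are distinct from the lines counted elsewhere, and that is precisely the missing step.

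The paper supplies exactly this missing ingredient and wraps it in an induction on $n$. It fixes a \emph{largest} family $S$ of pairs all generating one common line, $s=\abs{S}$. If $s\le n^{1/2}+1$, the multiplicity bound $m\ge\binom{n}{2}/s$ already exceeds $(n/3)^{3/2}$---note this uses the maximum multiplicity $s$ directly, with no detour through the independence number. If $s>n^{1/2}+1$, then $s>3$ and pairwise intersection forces $S$ to be a star $\{u,v_1\},\ldots,\{u,v_s\}$; the crucial claim is strictly stronger than your distinctness statement: each line $\ov{v_iv_j}$ is generated by \emph{no} pair other than $\{v_i,v_j\}$ (a second generator yields four vertices carrying four hyperedges, with the maximality of $S$ ruling out the degenerate case involving $u$). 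Unique generation makes these $\binom{s}{2}$ lines disjoint from every line $\ov{xy}$ with $x,y\notin\{v_1,\ldots,v_s\}$, so the induction hypothesis applied to the sub-hypergraph induced by the remaining $n-s$ vertices adds cleanly, giving $m\ge\binom{s}{2}+3^{-3/2}(n-s)^{3/2}>3^{-3/2}n^{3/2}$ since $s>n^{1/2}+1$. To salvage your plan you would need an analogue of this unique-generation claim (or some other mechanism preventing cross-centre collisions); as written, the proposal's final step is a hope rather than a proof.
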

\begin{proof}
  Let $(V,\ce)$ denote the hypergraph and let $m$ denote the number of
  its distinct lines. We will proceed by induction on $n$. For the induction
  basis, we choose the range $n\le 3$, where the inequality $m\ge
  (n/3)^{3/2}$ holds trivially. In the induction step, consider a
  largest set $S$ of unordered pairs of distinct vertices such that
  all the lines $\ov{vw}$ with $\{v,w\}\in S$ are identical and write
  $s=\abs{S}$.

\medskip

{\sc Case 1:} $s \le n^{1/2}+1$.\\
By assumption of this case, we have
\[
m\;\ge \;\frac{\tbinom{n}{2}}{n^{1/2}+1}\;=\;
\tfrac{1}{2} n^{3/2}\cdot\frac{n^{1/2}}{n^{1/2}+1}\cdot\frac{n-1}{n}\,;
\]
since $n\ge 4$, we have
\[
\tfrac{1}{2} n^{3/2}\cdot\frac{n^{1/2}}{n^{1/2}+1}\cdot\frac{n-1}{n}
\;\ge \; \tfrac{1}{2} n^{3/2}\cdot\frac{2}{3}\cdot\frac{3}{4}
\;=\;\tfrac{1}{4} n^{3/2} \;>\; \!\left(\frac{n}{3}\right)^{3/2}\!\!\!\!.
\]
{\sc Case 2:} $s > n^{1/2}+1$.\\
By assumption of this case and since $n\ge 4$, we have $s>3$. Every
two pairs in $S$ must share a vertex (else the four vertices would
induce four hyperedges); since $s>3$, it follows that there is a
vertex common to all the pairs in $S$, and so these pairs can be
enumerated as $\{u,v_1\}, \{u,v_2\}, \ldots ,\{u,v_s\}$. We are going
to prove that
\begin{itemize}
\item[($\star$)] each of the lines $\ov{v_iv_j}$ with $1\le i<j \le s$
  is uniquely defined
\end{itemize}
in the sense that $\ov{v_iv_j}=\ov{xy}\;\Ra  \; \{x,y\}=\{v_i,v_j\}$.
To do this, consider vertices $v_i,v_j,x,y$ such that
$\ov{v_iv_j}=\ov{xy}$. These vertices cannot be all distinct (else
they would induce four hyperedges), and so symmetry lets us assume
that $x=v_i$; we will derive a contradiction from the assumption that
$y\ne v_j$.  Since $y\in\ov{v_iy}=\ov{v_iv_j}$, we have
$\{y,v_i,v_j\}\in\ce$; since $\ov{v_iy}= \ov{v_iv_j}\ne \ov{uv_i}$, we
have $y\ne u$; since $\ov{uv_i}=\ov{uv_j}$, we have
$\{u,v_i,v_j\}\in\ce$; now $u\in\ov{v_iv_j}=\ov{v_iy}$, and so
$\{u,v_i,y\}\in\ce$; finally, $y\in\ov{uv_i}=\ov{uv_j}$ implies
$\{y,u,v_j\}\in\ce$. But then the four vertices $u,v_i,v_j,y$ induce
four hyperedges; this contradiction completes our proof of ($\star$).

\medskip

Let $\cl_1$ denote the set of all lines $\ov{v_iv_j}$, let $\cl_2$
denote the set of all lines $\ov{xy}$ with $x,y\not\in \{v_1,v_2,
\ldots ,v_s\}$, and let us set $c=3^{-3/2}$. By ($\star$), we have
$\abs{\cl_1}=\binom{s}{2}$ and $\cl_1\cap \cl_2=\emptyset$; by the
induction hypothesis, we have $\abs{\cl_2}\ge c(n-s)^{3/2}$; it
follows that
\[
m\ge \tbinom{s}{2} + c(n-s)^{3/2}> \tbinom{s}{2} + cn^{3/2} - \tfrac{3}{2}cn^{1/2}s 
> cn^{3/2} +\tfrac{1}{2}n^{1/2}s(1-3c) > cn^{3/2}.
\]
\end{proof}

For large $n$, the constant $3^{-3/2}$ in the lower bound of
Theorem~\ref{thm.no4++} can be improved by more careful analysis, but
the magnitude of this lower bound, $n^{3/2}$, is the best possible. To
see this, consider the hypergraph $(V_1\cup \ldots \cup V_k,\,\ce)$, where
$V_1$, \ldots, $V_k$ are pairwise disjoint, $\{u,v,w\}\in\ce$ if and
only if $u,v\in V_i$, $w\in V_j$, $i<j$, and $u\ne v$. Here, no four
vertices induce four hyperedges; the lines are all the sets
$\{u,v\}\cup V_{i+1}\cup \ldots \cup V_k$ and all the sets $V_i\cup\{w\}$
such that $w\in V_{i+1}\cup \ldots \cup V_k$; when $\abs{V_i}=k$ for all
$i$, their total number is $k^2(k-1)$.

\medskip

In a sense, Theorem~\ref{thm.no4++} is the only theorem of its kind:
in the hypergraph $(V,\binom{V}{3})$, every sub-hypergraph induced by
four vertices has four hyperedges and the hypergraph has only one
line.

\medskip

Combining Theorems \ref{thm.no2},
\ref{thm.no1no3}, \ref{thm.no4} suggests the following questions: 
\begin{question}
  True or false? If, in a {\rm $3$}-uniform hypergraph, every
  sub-hypergraph induced by four vertices has at least two hyperedges,
  then the hypergraph has the De Bruijn - Erd\H os property.
\end{question}

\begin{question}
  True or false? If, in a {\rm $3$}-uniform hypergraph, every
  sub-hypergraph induced by four vertices has one or two or four
  hyperedges, then the hypergraph has the De Bruijn - Erd\H os
  property.
\end{question}

\bigskip

{\bf\Large  Acknowledgment}\\

\noindent The work whose results are reported here began at a workshop
held at Concordia University in June 2011.  We are grateful to the
Canada Research Chairs program for its generous support of this
workshop. We also thank Luc Devroye, Fran\c cois Genest, and Mark
Goldsmith for their participation in the workshop and for stimulating
conversations.

\end{document}